\documentclass{article}

\usepackage[T1]{fontenc}
\usepackage[utf8]{inputenc}
\usepackage{amsmath,amssymb,amsfonts,amsthm}
\usepackage[margin=1in]{geometry}
\usepackage{bm}
\usepackage{mathtools}
\usepackage{microtype}
\usepackage{enumitem}
\usepackage{dsfont}
\usepackage{algorithm}
\usepackage{algorithmic}
\usepackage{authblk}
\usepackage[unicode,psdextra]{hyperref}
\usepackage{bookmark}
\hypersetup{
    colorlinks=true,
    linkcolor=blue,
    citecolor=blue,
    urlcolor=blue,
    pdftitle={Feasibility Correction in Linear Programs},
    pdfauthor={Pedro Abdalla, Roman Vershynin, Guangyi Zou}
}

\theoremstyle{plain}
\newtheorem{theorem}{Theorem}[section]
\newtheorem{lemma}[theorem]{Lemma}
\newtheorem{proposition}[theorem]{Proposition}
\newtheorem{corollary}[theorem]{Corollary}
\theoremstyle{definition}
\newtheorem{definition}[theorem]{Definition}
\newtheorem{problem}[theorem]{Problem}
\theoremstyle{definition}
\newtheorem{remark}[theorem]{Remark}

\newcommand{\R}{\mathbb{R}}
\newcommand{\Sph}{\mathbb{S}}
\newcommand{\one}{\mathbf{1}}
\newcommand{\opnorm}{\mathrm{op}}
\newcommand{\HS}{\mathrm{HS}}

\title{Feasibility Correction in Linear Programs}
\author[1]{Pedro Abdalla\thanks{Email: \href{mailto:pabdalla@wisc.edu}{\texttt{pabdalla@wisc.edu}}.}}
\author[2]{Roman Vershynin\thanks{Email: \href{mailto:rvershyn@uci.edu}{\texttt{rvershyn@uci.edu}}.}}
\author[2]{Guangyi Zou\thanks{Email: \href{mailto:zouguangyi2001@gmail.com}{\texttt{zouguangyi2001@gmail.com}}.}}
\affil[1]{Department of Mathematics, University of Wisconsin--Madison}
\affil[2]{Department of Mathematics, University of California, Irvine, CA 92697}

\date{}

\begin{document}
\maketitle

\begin{abstract}
We study feasibility correction for linear programs. Suppose an initial point satisfies most of the constraints. Can it be moved into the feasible region without changing the objective value? Our main deterministic result says that this is possible if the constraint matrix satisfies a restricted isometry property (RIP). We apply the correction theorem to obtain lower bounds for random linear programs with finite-moment or sub-Weibull entries. We complement these with matching-order upper bounds under
a uniform $p$th-moment bound for some $p>2$ and a delocalization
condition on the objective direction. As an application of our techniques, we also derive lower bounds for the Gaussian width of feasible regions under RIP assumptions.
\end{abstract}

\noindent\textbf{Keywords:} Feasibility correction; linear programming; random polytopes; restricted isometry property.

\section{Introduction}

Consider the standard linear program
\begin{equation}
\label{eq:LP}
    z_A^*(b,h)
    :=\sup\bigl\{h^\top x:x\in\R^d,\ A^\top x\le b\bigr\},
\end{equation}
where $A$ is a fixed $d\times n$ matrix, $b\in\R^n$, and $h\in\Sph^{d-1}$. In general, $z_A^{\ast}(b,h)$ does not admit a closed-form formula for arbitrary $A$. A natural question is to estimate $z_A^{\ast}(b,h)$ when $A$ is a random matrix with independent entries. %

\begin{problem}[Optimal value]\label{problem:optimal_value}
Estimate $z^*(b,h)$ when the matrix $A$ is random.
\end{problem}

To solve Problem~\ref{problem:optimal_value}, we consider a conceptually simpler but practically motivated question. The restriction $A^\top x\le b$ in \eqref{eq:LP} consists of $n$ constraints 
\begin{equation}\label{eq: constraints}
a_i^\top x\le b_i
\quad \text{for all } i=1,\ldots,n,
\end{equation}
where $a_1,\ldots,a_n$ denote the columns of $A$.
Imagine that we know a solution $x_0$ that satisfies {\em most} (but not all) of these constraints. Can we efficiently correct this ``almost solution'' to a true solution $x'$ of \eqref{eq:LP} without affecting the objective value?

\begin{problem}[Feasibility correction]\label{problem:correction}
Consider a linear program of the form \eqref{eq:LP}. Given $x_0\in\R^d$ satisfying most constraints in \eqref{eq: constraints}, construct $x'$ such that
\begin{equation}\label{eq:correction}
    A^\top x'\le b
    \quad\text{and}\quad
    h^\top x'=h^\top x_0.
\end{equation}
\end{problem}
The key technical ingredient in our work is a general feasibility correction algorithm for solving Problem~\ref{problem:correction} for any vector $b$, under a general deterministic condition on $A$ that incorporates many random designs of interest. 

Notice that, to preserve the objective value $h^\top x_0$, the correction $x'-x_0$ must lie in $h^\perp$. Thus it is natural that the technical assumption concerns $P_{h^\perp}A$, rather than the matrix $A$ itself. We require that the matrix $P_{h^\perp}A$ satisfy the so-called restricted isometry property (RIP), a standard notion in the field of compressive sensing \cite{foucart2013invitation}.

\begin{definition}[Restricted isometry property]\label{def:RIP}
A matrix $B\in\R^{d\times n}$ satisfies the RIP of order $m$ with constant $\delta_m\in(0,1)$ if%
\[
    (1-\delta_m)\|u\|_2^2
    \le\|Bu\|_2^2
    \le(1+\delta_m)\|u\|_2^2
\]
for every $m$-sparse vector $u\in\R^n$.
\end{definition}

Given an initial input $x_0$, we measure the violations by the violation energy defined as
\begin{equation}\label{eq:energy}
    \sum_{i=1}^n
    \bigl(a_i^\top x_0-b_i\bigr)_+^2.
\end{equation}
Our first main result answers Problem~\ref{problem:correction}: small violation energy against a slightly tightened system guarantees a correction in $h^\perp$.

\begin{theorem}[Feasibility correction]\label{thm:main_results}
Consider the feasibility correction problem in \eqref{eq:correction}. Assume that $P_{h^\perp}A$ satisfies the RIP of order $2m$ with constant $\delta:=\delta_{2m}<1/2$, and let $\varepsilon>0$. If the tightened violation energy of $x_0$ satisfies
\begin{equation}\label{eq:assumption_violation_energy}
    \sum_{i=1}^n
    \bigl(a_i^\top x_0-b_i+2\varepsilon\bigr)_+^2
    \le
    \left(\frac{1-2\delta}{2(1-\delta)}\right)^2
    \varepsilon^2m,
\end{equation}
then there exists a vector $x'$ satisfying all constraints in \eqref{eq: constraints} with 
$$ h^\top x'=h^\top x_0, \qquad\text{and }\|x'-x_0\|_2\le\frac{\varepsilon\sqrt m}{2\sqrt{1-\delta}}.$$
\end{theorem}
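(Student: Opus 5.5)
The plan is to look for the correction as $x'=x_0+y$ with $y\in h^\perp$, so that $h^\top x'=h^\top x_0$ holds automatically. Write $B:=P_{h^\perp}A$ with columns $\tilde a_i=P_{h^\perp}a_i$. Since $y\in h^\perp$, we have $a_i^\top y=\tilde a_i^\top y$, and the constraints for $x'$ become
\[
(B^\top y)_i\le b_i-a_i^\top x_0 \quad\text{for all } i.
\]
Put $r_i:=(a_i^\top x_0-b_i+2\varepsilon)_+$. Every index with $r_i=0$ has slack at least $2\varepsilon$. So it suffices to construct $y$ with two properties: $(B^\top y)_i\le -(r_i-2\varepsilon)$ on the indices that really need pushing, and $(B^\top y)_i\le 2\varepsilon$ elsewhere.

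\emph{Construction.} I will build $y=\sum_{k\ge 0}y_k$ iteratively, in the style of the restricted-orthogonality arguments of compressive sensing.
\begin{enumerate}[label=(\roman*)]
\item Step $0$: let $T_0=\{i:r_i>0\}$ restricted to the at most $m$ largest entries (see the counting remark below), and set $y_0:=-B_{T_0}(B_{T_0}^\top B_{T_0})^{-1}r_{T_0}$. This is the minimum-norm vector with $(B^\top y_0)_{T_0}=-r_{T_0}$. RIP gives $\|y_0\|_2\le \|r\|_2/\sqrt{1-\delta}$.
\item Off $T_0$, the standard RIP cross-correlation bound applies to any set $S$ of size at most $m$ disjoint from $T_0$, since $|S\cup T_0|\le 2m$:
\[
\|(B^\top y_0)_S\|_2\le \tfrac{\delta}{1-\delta}\|r_{T_0}\|_2 .
\]
\item Step $k$: let $T_k$ be the set of new indices where the accumulated correction eats more than an $\varepsilon$-portion of the remaining slack. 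Solve the analogous least-squares problem to cancel those entries, and again control the spill-over by the factor $\rho:=\delta/(1-\delta)$.
\end{enumerate}
Because $\delta<1/2$, we have $\rho<1$. Hence the energies $\|r^{(k)}\|_2\le\rho^k\|r\|_2$ decay geometrically, and
\[
\|y\|_2\le \frac{\|r\|_2}{\sqrt{1-\delta}\,(1-\rho)}=\frac{(1-\delta)\|r\|_2}{(1-2\delta)\sqrt{1-\delta}} .
\]
Plugging in hypothesis \eqref{eq:assumption_violation_energy}, namely $\|r\|_2\le \frac{1-2\delta}{2(1-\delta)}\varepsilon\sqrt m$, gives exactly $\|y\|_2\le \varepsilon\sqrt m/(2\sqrt{1-\delta})$. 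That this reproduces the stated constant is the main evidence that the geometric-series scheme is the intended one.

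\emph{Support counting.} Every index that enters some $T_k$ does so because an entry of size at least $\varepsilon$ must be corrected there: either $r_i\ge\varepsilon$, or the spill-over exceeds $\varepsilon$. By the energy bounds, the number of such indices is at most $\sum_k \|r^{(k)}\|_2^2/\varepsilon^2\lesssim \|r\|_2^2/\varepsilon^2<m/4$. So the cumulative support $\bigcup_k T_k$ stays of size at most $m$, and every cross-correlation estimate involves at most $2m$ columns, which keeps RIP of order $2m$ applicable throughout. Indices with $0<r_i<\varepsilon$ that are never selected still have $b_i-a_i^\top x_0>\varepsilon$. The split of the $2\varepsilon$ margin is: $\varepsilon$ for initially mildly violated indices and $\varepsilon$ for accumulated spill-over. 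This split should guarantee that unselected indices, whose total spill-over is pointwise below $\varepsilon$, remain feasible.

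\emph{Main obstacle.} The hardest part is this bookkeeping: making the thresholds consistent so that the total spill-over onto any unselected index stays below its slack, while the selected sets stay small. A cleaner alternative I would try first is to avoid iteration altogether. One would take $T=\bigcup_k T_k$ directly as the set produced by the thresholding, solve one least-squares problem on $T$ with target $-(r_T)$ shifted by $\varepsilon$, and bound the off-$T$ entries via the $2m$-RIP. The geometric factor $1/(1-\rho)$ would then arise from a fixed-point / Neumann-series inequality rather than from explicit iterations. If neither approach closes cleanly, the fallback is Farkas' lemma: infeasibility of the system in $y$ would produce a nonnegative certificate $\lambda$ with $B\lambda=0$. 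I would then contradict this using RIP on the large entries of $\lambda$, which are few by an energy/counting argument.
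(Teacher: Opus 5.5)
There is a genuine gap. Your outline has the right shape: least-squares increments in $h^\perp$, the RIP cross-correlation factor $\rho=\delta/(1-\delta)$, and a geometric series that reproduces $\varepsilon\sqrt m/(2\sqrt{1-\delta})$. But the bookkeeping you flag as the main obstacle is where the proof actually lives, and none of the variants you sketch closes it.

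\emph{Fixed threshold.} With a fixed per-step threshold $\varepsilon$ and per-increment cancellation, a never-selected index only satisfies $a_i^\top y_k<\varepsilon$ at each step. Its total spill-over is then bounded only by $K\varepsilon$ after $K$ steps, or by $\sum_k\rho\|r^{(k)}\|_2\le\rho\varepsilon\sqrt m/2$, not by $\varepsilon$.

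\emph{Accumulated criterion.} If you select by the accumulated correction instead, the vector cancelled at step $k$ collects all past spill-overs. You then get only $\|r^{(k)}\|_2\le\rho\sum_{j<k}\|r^{(j)}\|_2$, which gives no geometric decay.

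\emph{New indices only.} Selecting only ``new'' indices leaves already corrected indices exposed to the spill-over of later increments, and nothing in your scheme controls that.

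\emph{Cumulative support.} Your count $|\bigcup_k T_k|<m/4$ depends on the fixed threshold $\varepsilon$, which is exactly the choice that makes the spill-over non-summable. So the two requirements of your sketch conflict. The one-shot fixed-point variant and the Farkas fallback are not carried out either.

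The paper resolves this with three ideas.

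\textbf{Adaptive thresholds.} Set $\varepsilon_k=\|v_{k-1}\|_2/\sqrt m$, where $v_{k-1}$ is the residual just cancelled. If $m$ indices had $a_i^\top\xi^{(k-1)}\ge\varepsilon_k$, their block would have norm at least $\|v_{k-1}\|_2$. This contradicts the cross-correlation bound $\rho\|v_{k-1}\|_2$, so $|I_k|\le m$. Since $\varepsilon_k\le\rho^{k-1}\|v_0\|_2/\sqrt m$, the energy hypothesis gives $\sum_k\varepsilon_k\le\varepsilon/2$.

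\textbf{Cancel only the previous increment.} Each step cancels the previous \emph{increment} on $I_k$, over all indices, including previously selected ones. Consecutive active sets are then automatically disjoint, since for $i\in I_k$ you have $a_i^\top\xi^{(k)}=-a_i^\top\xi^{(k-1)}<0$. Hence RIP of order $2m$ is only ever applied to $I_k\cup I_{k+1}$, and no bound on the cumulative support is needed. None is available with these thresholds: each $I_k$ may have about $\rho^2m$ elements, over roughly $\log m$ steps.

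\textbf{Pairing identity.} For $i\in I_k$ one has $a_i^\top(\xi^{(k-1)}+\xi^{(k)})=0$. So for each fixed $i$, the increments split into disjoint cancelling pairs plus unpaired terms with $a_i^\top\xi^{(j)}<\varepsilon_{j+1}$, and a terminal term at most $\varepsilon/2$. This gives $a_i^\top z\le\varepsilon$ for $i\notin I_0$. For $i\in I_0$, the same bound holds after the initial step, which places $a_i^\top(x_0+\xi^{(0)})$ at $b_i-\varepsilon$.
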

The vector $z:=x'-x_0$ can be obtained after finitely many iterations of the correction algorithm in Section~\ref{sec:algorithm}.
\begin{remark}[Satisfying most of the constraints]\label{rmk:violating_condition}
The condition in \eqref{eq:assumption_violation_energy} bounds the violation energy against tightened thresholds. It also implies that most constraints are satisfied:
\[
    |I_0|\varepsilon^2
    \le\sum_{i=1}^n(a_i^\top x_0-b_i+2\varepsilon)_+^2
    \le\varepsilon^2\frac{m}{4}, \qquad\text{where }I_0=\{i:a_i^\top x_0>b_i-\varepsilon\}.
\]
\end{remark}
\paragraph{Random constraints.} As an application of Theorem~\ref{thm:main_results}, we derive sharp estimates, up to absolute constants, for $z^{\ast}(\one,h)$. Informally speaking, if
\begin{enumerate}
    \item $d, n/d\to \infty$; 
    \item $h$ satisfies a delocalization condition $[\log(n/d)]^{1/\alpha}\|h\|_\infty\to0$; and
    \item $A$ has independent centered unit-variance entries with uniformly bounded $\psi_\alpha$ norms,
\end{enumerate}
then
\[
    z^*(\one,h)
    \ge\frac{1-o(1)}{\sqrt{2\log(n/d)}}, \qquad\text{with $1-o(1)$ probability.}
\]
We refer the reader to Theorem~\ref{thm:random_lp_lower} for more details and for the case of finite $p$th moments.

The matching-order upper bound is Theorem~\ref{thm:upper_bound}. In particular, Theorem~\ref{thm:upper_bound} shows that if $d\to\infty$, $n/d\to\infty$, and the delocalization condition $\sqrt{\log(n/d)}\|h\|_\infty\to0$ holds, then
\[
    z^*(\one,h)\le\frac{1+o(1)}{\sqrt{2\log(n/d)}},\qquad\text{with $1-o(1)$ probability.}
\]
We also show an example illustrating that, in general, a delocalization assumption of this form is necessary; see Section~\ref{sec:discussion} for more details.

\paragraph{Related work.}
The work that is closest to ours is due to Bakhshi, Ostrowski, and Tikhomirov \cite{bakhshi2024optimal}, who studied the random linear program in Problem~\ref{problem:optimal_value}, where $b=\one$, $h$ is a fixed unit vector, and $A$ has independent centered unit-variance entries with uniformly bounded sub-Gaussian norms. In the regime $d\to\infty$, $n/d\to\infty$, the authors proved that
\[
    \sqrt{2\log(n/d)}\,z^*(\one,h)\longrightarrow1
    \qquad\text{a.s.}, \text{ if }\log^{3/2}(n/d)\|h\|_\infty\to0.
\]
Their lower bound requires only $\sqrt{\log(n/d)}\|h\|_\infty\to0$. They also obtain sharp spherical mean-width asymptotics in the sub-Gaussian case.

Their proof follows a similar high-level strategy: it starts with an initial point violating a small number of constraints. Then they iteratively correct these violations while preserving the objective value. However, their correction procedure imposes strong assumptions on the tail decay of the entries of $A$.

On the other hand, Theorem~\ref{thm:main_results} bypasses this issue and gives a deterministic RIP condition and an assumption on the violation energy.

In particular, the RIP assumption is satisfied for random $A$ if the entry distributions have finite moments of order $p>4$ or sub-Weibull tails. Under those moment conditions and subject to the delocalization assumptions in Theorem~\ref{thm:random_lp_lower}, the violation energy satisfies \eqref{eq:assumption_violation_energy}. 

For the upper bound, we require only
$\sqrt{\log(n/d)}\|h\|_\infty\to0$ under a uniform
$p$th-moment bound for some $p>2$. Combined with Theorem~\ref{thm:random_lp_lower}, Theorem~\ref{thm:upper_bound} therefore gives the sharp sub-Gaussian limit in probability under this weaker delocalization condition, improving on \cite{bakhshi2024optimal}. In addition, to the best of our knowledge, the connection between the restricted isometry property and the feasibility correction problem is new.

Hoffman's lemma \cite{hoffman1952approximate} bounds the distance between a point $x_0$ and the set
$$\{x \in \mathbb{R}^{d}\mid\forall i,~ a_i^\top x\le b_i
;~ h^\top x=h^\top x_0\}$$ when it is nonempty, in terms of the magnitude of the violation of the constraints $A^\top x_0-b$. In principle, these methods could be applied to \(x_0+h^\perp\), provided that its intersection with the feasible region is nonempty. In contrast, Theorem~\ref{thm:main_results} does not assume that the intersection is nonempty: projected RIP together with a violation energy bound provides an explicit feasible correction. Thus, Theorem~\ref{thm:main_results} applies to random linear programs with independent (potentially heavy-tailed) columns with RIP estimates \cite{adamczak2011restricted,guedon2014restricted,guedon2017interval}. For more applications of Hoffman's lemma, see \cite{leventhal2010randomized} for the convergence of randomized projections and \cite{aharoni1989block,briskman2015block} for block projection methods.

Our results also connect with the literature on convex geometry. Denote the feasible polytope by 
\[K_A:=\{x \in \mathbb{R}^{d}\mid \forall i,~ a_i^\top x\le 1
\}.\]
Classical results provide estimates for the width of the symmetric polyhedron $L_A:=\{x:\|A^\top x\|_\infty\le1\}$, which is contained in the polytope $K_A$ studied here. In fact, the volume-ratio estimate of Carl and Pajor \cite[Theorem~3.3]{carl1988gelfand}, together with Urysohn's inequality, implies that
\[
    w(K_A)\ge w(L_A)\ge c\sqrt{\frac{d}{\log(1+n/d)}},
\]
when $n\ge d$ and $\max_i\|a_i\|_2\le\sqrt d$ (without any RIP assumption). Litvak, Pajor, Rudelson, and Tomczak-Jaegermann \cite{litvak2005smallest} obtain mean-width estimates for symmetric random polyhedra; later work gives containment estimates for $\operatorname{conv}\{\pm a_i:i\le n\}$ under weak moment or small-ball assumptions \cite{guedon2020polytopes,guedon2022geometry}. Our result for Gaussian width in Corollary~\ref{cor:spherical_mean_RIP} complements this literature by presenting a general lower bound under the assumption that $A/\sqrt{d}$ satisfies an appropriate RIP condition. In particular, it recovers optimal estimates for the Gaussian width of $K_A$ when $A$ has independent Gaussian entries.

Smoothed analysis \cite{spielman2004smoothed,dadush2020friendly,huiberts2025upper,bach2025optimal} mainly concerns the average-case complexity of the simplex algorithm, a classical algorithm for solving linear programs. It consists of understanding the running time of the simplex algorithm under Gaussian perturbations of arbitrary normalized data. Our work is about the optimal value of the randomized linear programs. A natural extension of our results would be to consider non-centered random constraints, thereby connecting our setting more closely with the literature on smoothed analysis.

Another interesting connection arises in the spin glass literature. When we write $x=-r\theta$ with $r>0$ and $\|\theta\|_2=1$, the constraints become $\langle a_i,\theta\rangle\ge-1/r$, giving the negative spherical perceptron model. Gardner \cite{gardner1988space} initiated the statistical-mechanics study of perceptron storage capacity. Replica calculations predict full replica symmetry breaking at negative-margin jamming \cite{franz2016simplest}; a numerical determination of the transition under the full-RSB ansatz is given in \cite{annesi2025exact}. For Gaussian constraints at fixed aspect ratio, Montanari, Zhong, and Zhou \cite{montanari2024tractability} analyze random linear optimization programs. We discuss these connections in Section~\ref{sec:discussion}; our random asymptotic results concern $n/d\to\infty$.

\paragraph{Organization.}
Section~\ref{sec:algorithm} presents Algorithm~\ref{alg:orthogonal_proj} for solving Problem~\ref{problem:correction} and proves Theorem~\ref{thm:main_results}. Section~\ref{sec:application} treats applications of our main result to solving Problem~\ref{problem:optimal_value} and establishing Gaussian width estimates for feasible sets under the RIP condition. Section~\ref{sec:discussion} discusses further questions. Appendix~\ref{sec:appendix_tools} collects standard tools for concentration and deviation inequalities.

\paragraph{Notation.}
For $I\subset[n]$, let $A_I$ denote the submatrix of $A$ formed by the columns indexed by $I$. Given $h\in\Sph^{d-1}$, we write $P_{h^\perp}=I-hh^\top$ for the orthogonal projection onto $h^\perp$. The operator and Hilbert--Schmidt norms are denoted by $\|\cdot\|_{\opnorm}$ and $\|\cdot\|_{\HS}$, respectively. For $\alpha>0$, we use the notation
\[
    \|X\|_{\psi_\alpha}
    :=\inf\bigl\{t>0:\mathbb E\exp((|X|/t)^\alpha)\le2\bigr\}.
\]
For $p>0$, we write \(\|X\|_{L^p}=(\mathbb E|X|^p)^{1/p}.\) Constants denoted by $C,c$ may change from line to line; dependencies are displayed as $C(p)$, $c(\varepsilon)$, etc. We write $u_+=\max\{u,0\}$ and $X\lesssim Y$ if $X\le CY$ for an absolute constant $C$.

\section{Correction algorithm under projected RIP}\label{sec:algorithm}

The starting point of this section is Algorithm~\ref{alg:orthogonal_proj}, which gives a finite-step constructive proof of Theorem~\ref{thm:main_results} under a projected RIP condition.
\begin{algorithm}[ht]
\caption{Iterative correction via orthogonal projection}
\label{alg:orthogonal_proj}
\begin{algorithmic}[1]
\STATE \textbf{Input:} $A\in\R^{d\times n}$, $h\in\Sph^{d-1}$, $b\in\R^n$, $x_0\in\R^d$, $m\in\mathbb N$, $\varepsilon>0$.
\STATE $P\gets I-hh^\top$.
\STATE $I_0\gets\{i\in[n]:a_i^\top x_0>b_i-\varepsilon\}$.
\STATE \textbf{if} $I_0=\emptyset$ \textbf{then return} $x_0$.
\STATE $z^{(0)}\gets-PA_{I_0}(A_{I_0}^\top PA_{I_0})^{-1}(A_{I_0}^\top x_0-b_{I_0}+\varepsilon\one)$.
\STATE $\xi^{(0)}\gets z^{(0)}$, \quad $v_0\gets A_{I_0}^\top\xi^{(0)}$.
\FOR{$k=1,2,\ldots$}
    \STATE $\varepsilon_k\gets\|v_{k-1}\|_2/\sqrt m$.
    \STATE $I_k\gets\{i\in[n]:a_i^\top\xi^{(k-1)}\ge\varepsilon_k\}$.
    \STATE \textbf{if} $I_k=\emptyset$ \textbf{then return} $x_0+z^{(k-1)}$.
    \STATE $v_k\gets A_{I_k}^\top\xi^{(k-1)}$.
    \STATE \textbf{if} $\max\{\varepsilon_k,\|v_k\|_2\}\le\varepsilon/2$ \textbf{then return} $x_0+z^{(k-1)}$.
    \STATE $\xi^{(k)}\gets-PA_{I_k}(A_{I_k}^\top PA_{I_k})^{-1}v_k$.
    \STATE $z^{(k)}\gets z^{(k-1)}+\xi^{(k)}$.
\ENDFOR
\end{algorithmic}
\end{algorithm}
Intuitively, given the initial point $x_0$ and $\varepsilon>0$ as in Theorem~\ref{thm:main_results}, we define the initial active set using the tightened thresholds:
\begin{equation}\label{eq:initial_violation_set}
    I_0=\{i:a_i^\top x_0>b_i-\varepsilon\}.
\end{equation}
The goal of Algorithm~\ref{alg:orthogonal_proj} is to find a correction vector $z\in h^\perp$ such that
\begin{equation}\label{problem:stronge}
\begin{split}
    &a_i^\top(x_0+z)\le b_i,\qquad i\in I_0,\\
    &a_i^\top z\le\varepsilon,\qquad i\notin I_0.
\end{split}
\end{equation}
These conditions make $x_0+z$ feasible, while $z\in h^\perp$ preserves the objective value. 

\paragraph{Design principles.}
\begin{enumerate}
\item \textbf{Invariance of the objective value:} To ensure that $h^\top x'=h^\top x_0$, every correction increment $\xi^{(k)}$ must lie in $h^\perp$, the orthogonal complement of $h$.
\item \textbf{Immediate cancellation via batch projection:} At each iteration $k$, the algorithm identifies the active set $I_k$ and performs a joint projection that cancels the preceding residual on $I_k$.
\item \textbf{Controlled residuals via projected RIP:} The RIP ensures that the projected constraint vectors $\{P_{h^\perp}a_i\}$ are nearly orthogonal on sparse supports. This property guarantees geometric decay of the active-set residual vectors.
\end{enumerate}

The initial cumulative correction is
\[
    z^{(0)}:=-P_{h^\perp}A_{I_0}
    (A_{I_0}^\top P_{h^\perp}A_{I_0})^{-1}
    (A_{I_0}^\top x_0-b_{I_0}+\varepsilon\one).
\]
At iteration $k$, we add the correction increment $\xi^{(k)}$ to the cumulative correction $z^{(k-1)}$. Let $I_k$ be the active set and let $v_k$ be the active-set residual vector, where $v_0=A_{I_0}^\top\xi^{(0)}$. As described in Algorithm~\ref{alg:orthogonal_proj}, the update rules for $k\ge1$ are
\begin{equation}\label{eq:update_rules}
\begin{split}
    &\varepsilon_k:=\frac{\|v_{k-1}\|_2}{\sqrt m},\qquad
    I_k:=\{i\in[n]:a_i^\top\xi^{(k-1)}\ge\varepsilon_k\},\qquad
    v_k:=A_{I_k}^\top\xi^{(k-1)},\\
    &\xi^{(k)}:=-P_{h^\perp}A_{I_k}
    (A_{I_k}^\top P_{h^\perp}A_{I_k})^{-1}v_k,\qquad
    z^{(k)}:=z^{(k-1)}+\xi^{(k)}.
\end{split}
\end{equation}
The algorithm stops if $I_k=\emptyset$ or, after computing $v_k$, if $\max\{\varepsilon_k,\|v_k\|_2\}\le\varepsilon/2$. Notice that an empty active set is detected before forming the next residual vector $v_k$. Thus, in each iteration, we have $\varepsilon_{k}\ne 0$.

Every correction increment lies in $h^\perp$, so the objective value is unchanged. The next two results establish the remaining principles.

\begin{lemma}[Second principle]\label{lem:second_principle}
Let $I_k$ be the active set at iteration $k$ of Algorithm~\ref{alg:orthogonal_proj}, as defined in \eqref{eq:update_rules}. Then $I_k$ and $I_{k+1}$ are disjoint whenever both are generated.
\end{lemma}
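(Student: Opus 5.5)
The plan is to show that, right after step $k$, every constraint in $I_k$ has residual exactly zero along the new increment. Then no index of $I_k$ can meet the strictly positive threshold that defines $I_{k+1}$.

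The key identity uses $P:=P_{h^\perp}$, which is symmetric and idempotent, so $P^\top P = P$. Whenever $\xi^{(k)}$ is generated, we have
\[
A_{I_k}^\top \xi^{(k)} = -A_{I_k}^\top P A_{I_k}\,(A_{I_k}^\top P A_{I_k})^{-1} v_k = -v_k .
\]
The same computation at $k=0$ gives $A_{I_0}^\top\xi^{(0)} = -(A_{I_0}^\top x_0 - b_{I_0} + \varepsilon\one)$.

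Before using this, I must check that the inverse exists and that $\xi^{(k)}$ is well defined. The matrix $A_{I_k}^\top P A_{I_k} = (PA_{I_k})^\top(PA_{I_k})$ is invertible as soon as $PA_{I_k}$ has trivial kernel. This holds by the RIP assumption of Theorem~\ref{thm:main_results} provided $|I_k|\le 2m$. I take this as the standing hypothesis under which the algorithm generates $\xi^{(k)}$; the cardinality bound itself, $|I_k|\le m$, comes from the counting argument of the next result. The lemma is only about sets that are actually generated, so this is harmless.

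Now fix $k\ge 1$ with $I_k$ and $I_{k+1}$ both generated. We have $v_k = A_{I_k}^\top \xi^{(k-1)}$, and by definition of $I_k$ every entry satisfies $(v_k)_i = a_i^\top\xi^{(k-1)} \ge \varepsilon_k > 0$. By the identity above, $a_i^\top \xi^{(k)} = -a_i^\top\xi^{(k-1)} \le -\varepsilon_k < 0$ for each $i\in I_k$. The set $I_{k+1}$ is generated, so the algorithm did not stop at line 10 or line 12 in iteration $k$, and it then computes $\varepsilon_{k+1} = \|v_k\|_2/\sqrt m$. This is positive because $I_k\neq\emptyset$ and every entry of $v_k$ is positive; this is also the paper's observation that $\varepsilon_k\ne0$. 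Hence every $i\in I_k$ fails the condition $a_i^\top\xi^{(k)}\ge\varepsilon_{k+1}$, so $I_k\cap I_{k+1}=\emptyset$.

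The case $k=0$ also needs checking, since $I_0$ and $I_1$ are both generated sets. For $i\in I_0$ we have $a_i^\top x_0 - b_i + \varepsilon > 0$ by the definition of $I_0$. Therefore $a_i^\top\xi^{(0)} < 0 < \varepsilon_1$, with $\varepsilon_1 = \|v_0\|_2/\sqrt m > 0$ because $v_0 = -(A_{I_0}^\top x_0 - b_{I_0} + \varepsilon\one)$ has all entries strictly negative. So $I_0\cap I_1=\emptyset$ as well.

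The only real subtlety is the well-definedness of the inverse, which depends on the size bound on the active sets. Everything else is the one-line cancellation identity.
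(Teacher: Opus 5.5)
Your proof is correct and follows the paper's argument essentially verbatim. The cancellation identity $A_{I_k}^\top\xi^{(k)}=-A_{I_k}^\top\xi^{(k-1)}$ forces $a_j^\top\xi^{(k)}\le-\varepsilon_k<0$ on $I_k$, the initialization handles $I_0$ versus $I_1$, and positivity of $\varepsilon_{k+1}$ propagates inductively from $\varepsilon_1>0$, exactly as in the paper. Your remark on the invertibility of $A_{I_k}^\top P_{h^\perp}A_{I_k}$ is a harmless addition; the paper leaves it to the cardinality control in Proposition~\ref{prop:third_principle}.
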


\begin{proof}
First, $I_0$ and $I_1$ are disjoint. Indeed, for $j\in I_0$, the initialization gives $a_j^\top\xi^{(0)}=-(a_j^\top x_0-b_j+\varepsilon)<0$, whereas membership in $I_1$ requires $a_j^\top\xi^{(0)}\ge\varepsilon_1>0$. For $k\ge1$, suppose $j\in I_k\cap I_{k+1}$. The update rules \eqref{eq:update_rules} give
\[
\begin{aligned}
    A_{I_k}^\top\xi^{(k)}
    &=-\bigl(A_{I_k}^\top P_{h^\perp}A_{I_k}\bigr)
    (A_{I_k}^\top P_{h^\perp}A_{I_k})^{-1}
    A_{I_k}^\top\xi^{(k-1)}\\
    &=-A_{I_k}^\top\xi^{(k-1)}.
\end{aligned}
\]
Consequently, $a_j^\top\xi^{(k)}=-a_j^\top\xi^{(k-1)}\le-\varepsilon_k<0$. However, $j\in I_{k+1}$ implies $a_j^\top\xi^{(k)}\ge\varepsilon_{k+1}>0$, a contradiction. The preceding observation also shows inductively that $\varepsilon_k>0$ whenever a nonempty set $I_k$ is generated.
\end{proof}

The third principle uses projected RIP and the thresholds $\varepsilon_k$ to control active-set sizes and ensure geometric decay of the active-set residual vectors.

\begin{proposition}[Third principle]\label{prop:third_principle}
Let $m\in\mathbb N$, let $I_0$ be the set in \eqref{eq:initial_violation_set}, and assume that $P_{h^\perp}A\in\R^{d\times n}$ satisfies the RIP of order $2m$ with constant $\delta_{2m}<1/2$. Assume that $1\le|I_0|\le m$ and that $x_0$ satisfies the violation-energy condition \eqref{eq:assumption_violation_energy}. Set $v_0=A_{I_0}^\top\xi^{(0)}$ and $v_k=A_{I_k}^\top\xi^{(k-1)}$ for $k\ge1$. Then the following hold for all indices generated by the algorithm:
\begin{enumerate}[label=(\alph*)]
\item \textbf{Cardinality control:} $|I_k|\le m$, so the RIP assumption remains applicable at each iteration.
\item \textbf{Geometric decay:}
\[
    \|v_{k+1}\|_2\le\rho\|v_k\|_2,
    \qquad\rho:=\frac{\delta_{2m}}{1-\delta_{2m}}<1.
\]
\item \textbf{Finite termination:} Algorithm~\ref{alg:orthogonal_proj} stops after at most
\(1+\frac{[\log((1-\rho)\sqrt m)]_+}{\log(1/\rho)}\) loop iterations.
\end{enumerate}
\end{proposition}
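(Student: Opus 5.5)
The plan is to work throughout with the projected matrix $B:=P_{h^\perp}A$, whose columns are $Pa_i$. Since every increment $\xi^{(k)}$ lies in $h^\perp$, we have $a_i^\top\xi^{(k)}=(Pa_i)^\top\xi^{(k)}$ and $A_I^\top P A_I=B_I^\top B_I$. Two standard RIP facts will be used repeatedly.
\begin{itemize}
\item[(i)] For $|I|\le m$, $B_I^\top B_I$ has spectrum in $[1-\delta,1+\delta]$, where $\delta=\delta_{2m}$.
\item[(ii)] For disjoint $I,J$ with $|I|+|J|\le 2m$, we have $\|B_J^\top B_I w\|_2\le\delta\|w\|_2$. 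This follows from the usual polarization argument applied to sparse vectors on $I\cup J$.
\end{itemize}

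\textbf{Key estimate.} I would write $\xi^{(k)}=B_{I_k}w_k$ with $w_k=-(B_{I_k}^\top B_{I_k})^{-1}v_k$, so that $\|w_k\|_2\le\|v_k\|_2/(1-\delta)$ by (i), assuming $|I_k|\le m$. By Lemma~\ref{lem:second_principle}, $I_{k+1}\cap I_k=\emptyset$. Hence for any $J\subset I_{k+1}$ with $|J|\le m$, fact (ii) gives
\[
\|B_J^\top\xi^{(k)}\|_2\le\delta\|w_k\|_2\le\rho\|v_k\|_2 .
\]

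\textbf{Part (a).} I would argue by induction on $k$; the base case $|I_0|\le m$ is an assumption. Suppose $|I_k|\le m$ and take any $J\subset I_{k+1}$ with $|J|=\min(|I_{k+1}|,m)$. Every $i\in J$ satisfies $a_i^\top\xi^{(k)}\ge\varepsilon_{k+1}>0$, so by the key estimate
\[
|J|\,\varepsilon_{k+1}^2\le\|B_J^\top\xi^{(k)}\|_2^2\le\rho^2\|v_k\|_2^2<\|v_k\|_2^2=m\,\varepsilon_{k+1}^2 .
\]
Thus $|J|<m$, which forces $J=I_{k+1}$ and $|I_{k+1}|<m$. This subset trick is the step I expect to need the most care. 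The RIP can only be applied to sets of size at most $m$, so it must be used on a truncated subset before we know $|I_{k+1}|$ is small. It also requires disjointness from $I_k$ so that the support of $I_k\cup J$ has size at most $2m$.

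\textbf{Part (b).} Once $|I_{k+1}|\le m$ is known, apply the key estimate with $J=I_{k+1}$. This gives $\|v_{k+1}\|_2=\|B_{I_{k+1}}^\top\xi^{(k)}\|_2\le\rho\|v_k\|_2$.

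\textbf{Part (c).} First bound $v_0$. We have $v_0=A_{I_0}^\top\xi^{(0)}=-(A_{I_0}^\top x_0-b_{I_0}+\varepsilon\one)$. Each entry $a_i^\top x_0-b_i+\varepsilon$ with $i\in I_0$ is positive and at most $(a_i^\top x_0-b_i+2\varepsilon)_+$. Hence by \eqref{eq:assumption_violation_energy}, and using $1-\rho=(1-2\delta)/(1-\delta)$,
\[
\|v_0\|_2\le\frac{1-2\delta}{2(1-\delta)}\,\varepsilon\sqrt m=\frac{(1-\rho)\,\varepsilon\sqrt m}{2}.
\]
By (b), $\varepsilon_k=\|v_{k-1}\|_2/\sqrt m\le\rho^{k-1}(1-\rho)\varepsilon/2$ and $\|v_k\|_2\le\rho\|v_{k-1}\|_2\le\|v_{k-1}\|_2$. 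So the stopping test $\max\{\varepsilon_k,\|v_k\|_2\}\le\varepsilon/2$ holds as soon as $\rho^{k-1}(1-\rho)\sqrt m\le1$. That is, it holds once $k\ge1+[\log((1-\rho)\sqrt m)]_+/\log(1/\rho)$, with the case $\rho=0$ being trivial. The algorithm may also halt earlier because some $I_k=\emptyset$, which only helps.
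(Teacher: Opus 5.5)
Your argument is essentially the paper's proof: induction on $k$; a truncated set $J\subset I_{k+1}$ with $|J|\le m$ that is disjoint from $I_k$ by Lemma~\ref{lem:second_principle}; approximate orthogonality combined with the inverse-Gram bound $1/(1-\delta_{2m})$ to get $\|B_J^\top\xi^{(k)}\|_2\le\rho\|v_k\|_2$; and the bound $\|v_0\|_2\le(1-\rho)\varepsilon\sqrt m/2$ from the violation-energy condition. Taking $|J|=\min\{|I_{k+1}|,m\}$ instead of arguing by contradiction with $|J|=m$ is equivalent. One small correction to your notation: for $k=0$ the sign is $\xi^{(0)}=+B_{I_0}(B_{I_0}^\top B_{I_0})^{-1}v_0$, which the paper tracks with $\sigma_k$. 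Only norms enter, so this is harmless.

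The one real slip is in (c). You weaken $\|v_k\|_2\le\rho\|v_{k-1}\|_2$ to $\|v_k\|_2\le\|v_{k-1}\|_2$. Write $x=[\log((1-\rho)\sqrt m)]_+/\log(1/\rho)$. Your sufficient condition is then $k\ge1+x$, so the first iteration at which stopping is guaranteed is $1+\lceil x\rceil$. This exceeds the claimed bound $1+x$ whenever $x\notin\mathbb{Z}$; for example, $x=1/2$ gives $2$ rather than $1$. The fix is to keep the factor $\rho$:
\[
\|v_k\|_2\le\rho^k\|v_0\|_2\le\rho^k(1-\rho)\varepsilon\sqrt m/2,
\]
while $\varepsilon_k\le\varepsilon/2$ for all $k\ge1$. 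The test is then met at every reached $k\ge\max\{1,x\}$, so the algorithm returns by iteration $\max\{1,\lceil x\rceil\}\le 1+x$.
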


We postpone the proof of Proposition~\ref{prop:third_principle} to the end of this section. We now prove the main result of this section.

\begin{proof}[Proof of Theorem~\ref{thm:main_results}]
If $I_0=\emptyset$, then $x_0$ is feasible. Assume below that $I_0\ne\emptyset$ and set $\rho=\delta_{2m}/(1-\delta_{2m})$. By the initialization and \eqref{eq:assumption_violation_energy},
\[
    \frac{\|v_0\|_2}{\sqrt m}
    =\frac{\|A_{I_0}^\top x_0-b_{I_0}+\varepsilon\one\|_2}{\sqrt m}
    \le(1-\rho)\frac{\varepsilon}{2}.
\]
Proposition~\ref{prop:third_principle} gives
\begin{equation}\label{eq:threshold_sum}
    \varepsilon_k\le(1-\rho)\frac{\varepsilon}{2}\rho^{k-1},
    \qquad\sum_{k=1}^\infty\varepsilon_k\le\frac{\varepsilon}{2},
    \qquad\|v_k\|_2\le\rho^k\|v_0\|_2.
\end{equation}
Suppose the algorithm returns at loop iteration $\tau\ge1$, before forming $\xi^{(\tau)}$, and set $K=\tau-1$. It returns $x_0+z^{(K)}$, having computed the correction $z=z^{(K)}$ from the increments $\xi^{(0)},\ldots,\xi^{(K)}$.

The RIP and the update rules give, for every $0\le k\le K$,
\[
    \|\xi^{(k)}\|_2\le\frac{\|v_k\|_2}{\sqrt{1-\delta}}.
\]
Consequently, by \eqref{eq:threshold_sum},
\[
    \|z\|_2
    \le\sum_{k=0}^K\|\xi^{(k)}\|_2
    \le\frac{\|v_0\|_2}{\sqrt{1-\delta}(1-\rho)}
    \le\frac{\varepsilon\sqrt m}{2\sqrt{1-\delta}}.
\]

Fix $i\in[n]$. For every executed iteration $k\in\{1,\ldots,K\}$ with $i\in I_k$, the update rule gives
\begin{equation}\label{eq:pairing_identity}
    a_i^\top(\xi^{(k-1)}+\xi^{(k)})=0.
\end{equation}
Lemma~\ref{lem:second_principle} implies that the pairs $\{k-1,k\}$ appearing in \eqref{eq:pairing_identity} are disjoint. Every nonterminal term not belonging to such a pair satisfies $a_i^\top\xi^{(j)}<\varepsilon_{j+1}$. If $I_{K+1}=\emptyset$, the same bound holds for the terminal term. If the stopping rule based on the threshold and residual norm is met, then the terminal term, when unpaired, satisfies
\[
    a_i^\top\xi^{(K)}
    \le\max\{\varepsilon_{K+1},\|v_{K+1}\|_2\}
    \le\frac{\varepsilon}{2}.
\]

If $i\notin I_0$, the pairing identity and \eqref{eq:threshold_sum} therefore yield
\begin{equation}\label{equ:sum_negligible}
    a_i^\top z
    \le\sum_{j=1}^\infty\varepsilon_j+\frac{\varepsilon}{2}
    \le\varepsilon.
\end{equation}
Since $a_i^\top x_0\le b_i-\varepsilon$, this gives $a_i^\top(x_0+z)\le b_i$.

If $i\in I_0$, then $i\notin I_1$. Hence none of the cancelling pairs contains the initial correction increment $\xi^{(0)}$. Applying the same argument to the sum of the remaining correction increments gives $\sum_{j=1}^K a_i^\top\xi^{(j)}\le\varepsilon$, while the initialization gives $a_i^\top(x_0+\xi^{(0)})=b_i-\varepsilon$. Thus $a_i^\top(x_0+z)\le b_i$. Finally, every correction increment lies in $h^\perp$, so $h^\top z=0$.
\end{proof}

To prove Proposition~\ref{prop:third_principle}, we use the following standard consequence of the RIP.

\begin{lemma}[Approximate orthogonality, {\cite[Lemma~2.1]{MR2412803}}]\label{lem:projected_rip_ortho}
Let $B$ be a matrix that satisfies the RIP of order $m$ with constant $\delta_m>0$. Then
\begin{equation}
    \|B_I^\top B_J\|_{\opnorm}\le\delta_m
\end{equation}
for any disjoint sets $I$ and $J$ such that $|I\cup J|\le m$.
\end{lemma}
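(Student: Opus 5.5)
The statement to prove is Lemma~\ref{lem:projected_rip_ortho}: $\|B_I^\top B_J\|_{\opnorm}\le\delta_m$ for disjoint $I,J$ with $|I\cup J|\le m$. I would use the standard polarization argument, restricted to vectors supported on $I\cup J$.

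\textbf{Reduction to unit vectors.} Fix unit vectors $u\in\R^n$ supported on $I$ and $w\in\R^n$ supported on $J$. Since $\|B_I^\top B_J\|_{\opnorm}=\sup_{u,w}\langle B_I u_I, B_J w_J\rangle$, with the supremum over such unit pairs, it suffices to show
\[
|\langle Bu,Bw\rangle|\le\delta_m .
\]
Because $I\cap J=\emptyset$, the vectors $u$ and $w$ are orthogonal. Hence $u\pm w$ are $m$-sparse, being supported in $I\cup J$, and $\|u\pm w\|_2^2=2$.

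\textbf{Polarization.} By the polarization identity,
\[
\langle Bu,Bw\rangle=\tfrac14\bigl(\|B(u+w)\|_2^2-\|B(u-w)\|_2^2\bigr).
\]
Apply the RIP of order $m$ to each term. The first term is at most $(1+\delta_m)\cdot 2$, and the second is at least $(1-\delta_m)\cdot 2$. Therefore
\[
\langle Bu,Bw\rangle\le\tfrac14\bigl(2(1+\delta_m)-2(1-\delta_m)\bigr)=\delta_m .
\]
Replacing $w$ by $-w$ gives the lower bound $-\delta_m$. Taking the supremum over $u,w$ yields the claim.

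\textbf{Main obstacle.} There is essentially none. The one point that needs care is that disjointness of $I$ and $J$ is what makes $\|u\pm w\|_2^2$ equal to exactly $2$. Without it, cross terms would appear and the bound would only hold up to an additional $\langle u,w\rangle$ contribution.
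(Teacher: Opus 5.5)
Your polarization argument is correct and is essentially the standard proof of \cite[Lemma~2.1]{MR2412803}, which the paper cites rather than reproduces. As there, disjoint supports give $\|u\pm w\|_2^2=2$, and the RIP bounds on $\|B(u\pm w)\|_2^2$ together with the polarization identity yield $|\langle Bu,Bw\rangle|\le\delta_m$.
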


\begin{proof}[Proof of Proposition~\ref{prop:third_principle}]
We induct on $k$. Initially, $|I_0|\le m$, and the RIP lower bound makes $z^{(0)}$ well-defined. Assume now that $|I_k|\le m$.

\smallskip
\noindent\textbf{Step 1: Cardinality control.}
Suppose, for contradiction, that $|I_{k+1}|>m$ and choose a subset $J\subset I_{k+1}$ of cardinality exactly $m$. By the update rules \eqref{eq:update_rules},
\[
    \|A_J^\top\xi^{(k)}\|_2
    \ge\sqrt{|J|}\,\varepsilon_{k+1}
    =\sqrt m\,\varepsilon_{k+1}
    =\|v_k\|_2.
\]
On the other hand, by Lemma~\ref{lem:second_principle}, $J$ and $I_k$ are disjoint and satisfy $|J\cup I_k|\le2m$. Thus Lemma~\ref{lem:projected_rip_ortho} and the lower RIP bound in Definition~\ref{def:RIP} give
\[
\begin{aligned}
    \|A_J^\top\xi^{(k)}\|_2
    &\le
    \underbrace{\|A_J^\top P_{h^\perp}A_{I_k}\|_{\opnorm}}_{\le\delta_{2m}}
    \underbrace{\|(A_{I_k}^\top P_{h^\perp}A_{I_k})^{-1}\|_{\opnorm}}_{\le1/(1-\delta_{2m})}
    \|v_k\|_2\\
    &\le\rho\|v_k\|_2<\|v_k\|_2,
\end{aligned}
\]
which is a contradiction because $\delta_{2m}<1/2$. We conclude that $|I_{k+1}|\le m$.

\smallskip
\noindent\textbf{Step 2: Geometric decay.}
By the initialization and the update rule for $\xi^{(k)}$ in \eqref{eq:update_rules}, the next active-set residual vector satisfies
\[
    v_{k+1}
    =\sigma_k(A_{I_{k+1}}^\top P_{h^\perp}A_{I_k})
    (A_{I_k}^\top P_{h^\perp}A_{I_k})^{-1}v_k,
\]
where $\sigma_0=1$ and $\sigma_k=-1$ for $k\ge1$. Since $|I_{k+1}|\le m$ and, by the induction hypothesis, $|I_k|\le m$, we have $|I_k\cup I_{k+1}|\le2m$. Moreover, the sets are disjoint by Lemma~\ref{lem:second_principle}. Thus Lemma~\ref{lem:projected_rip_ortho}, applied to $B=P_{h^\perp}A$, and the lower RIP bound give
\[
\begin{aligned}
    \|v_{k+1}\|_2
    &\le
    \underbrace{\|A_{I_{k+1}}^\top P_{h^\perp}A_{I_k}\|_{\opnorm}}_{\le\delta_{2m}}
    \underbrace{\|(A_{I_k}^\top P_{h^\perp}A_{I_k})^{-1}\|_{\opnorm}}_{\le1/(1-\delta_{2m})}
    \|v_k\|_2\\
    &\le\rho\|v_k\|_2.
\end{aligned}
\]
This proves geometric decay. In particular, $\|v_k\|_2\le\rho^k\|v_0\|_2$ and $\varepsilon_k\le\rho^{k-1}\|v_0\|_2/\sqrt m$. The violation-energy condition gives $\|v_0\|_2/\sqrt m\le(1-\rho)\varepsilon/2$. Thus $\varepsilon_k\le\varepsilon/2$ throughout, and the stopping rule is met as soon as $\|v_k\|_2\le\varepsilon/2$. Geometric decay therefore bounds the number of loop iterations by
\[
    1+\frac{[\log((1-\rho)\sqrt m)]_+}{\log(1/\rho)}.
\]
This proves finite termination. In particular, for a fixed gap $\delta_{2m}\le1/2-\kappa$, the algorithm has polynomial arithmetic complexity.
\end{proof}

\section{Applications}\label{sec:application}
We now provide applications of our general Theorem~\ref{thm:main_results} to random linear programs and to deterministic feasible regions under RIP.

\subsection{Linear programming with random constraints: lower bounds}

Throughout this subsection, $b=\one$, and $A=[a_1,\ldots,a_n]\in\R^{d\times n}$ has independent centered unit-variance entries. To apply Theorem~\ref{thm:main_results}, we need two estimates: a bound on the initial violation energy and a sufficiently large RIP order for the matrix $P_{h^\perp}A/\sqrt d$. We prove these facts separately, using the tools in Appendix~\ref{sec:appendix_tools}.

\begin{proposition}[Expected violation energy]\label{prop:expected_energy}
Let $h\in\Sph^{d-1}$ be deterministic, and fix $\eta>0$ and $\delta\in(0,1)$. There exist constants $C_0>1$ and $C_1>0$, depending only on $\eta,\delta,K$ and on $p$ or $\alpha$ in the corresponding case, such that the following holds. For $n/d\ge C_0$, set
\[
    x_0=\frac{1-\delta}{\sqrt{2(1+\eta)\log(n/d)}}h,
    \qquad\mathcal E=\|(A^\top x_0-\one)_+\|_2^2.
\]
Suppose that one of the following conditions holds.
\begin{enumerate}[label=(\roman*),leftmargin=2em]
\item \textbf{Finite-moment case.} For some $p>4$, we have
\begin{equation}\label{eq:LP_delocalization}
    \|A_{ij}\|_{L^p}\le K\quad\text{for all }i,j,
    \qquad
    \|h\|_p\le C_0^{-1}(n/d)^{-(1+\eta)/p}.
\end{equation}
\item \textbf{Sub-Weibull case.} For some $\alpha\in(0,2]$, we have
\begin{equation}\label{eq:L_inf_delocalization}
    \|A_{ij}\|_{\psi_\alpha}\le K\quad\text{for all }i,j,
    \qquad
    \|h\|_\infty\le C_0^{-1}[\log(n/d)]^{-1/\alpha}.
\end{equation}
\end{enumerate}
Then
\[
    \mathbb E\mathcal E\le C_1d(n/d)^{-\eta}.
\]
\end{proposition}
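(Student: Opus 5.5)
By linearity of expectation, $\mathbb E\mathcal E=\sum_{j=1}^n\mathbb E(tS_j-1)_+^2$, where $S_j:=a_j^\top h=\sum_i A_{ij}h_i$ and $t:=(1-\delta)/\sqrt{2(1+\eta)\log(n/d)}$. Each $S_j$ is centered with unit variance, since $\|h\|_2=1$. It therefore suffices to show that, for every single column,
\[
    \mathbb E(tS-1)_+^2\le C_1(n/d)^{-1-\eta}.
\]
We use the layer-cake formula
\[
    \mathbb E(tS-1)_+^2=\int_0^\infty 2u\,\mathbb P\bigl(S>(1+u)/t\bigr)\,du .
\]
Set $s=(1+u)/t$. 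This reduces the task to a tail bound of the form
\[
    \mathbb P(S>s)\lesssim \exp\bigl(-(1-\delta')s^2/2\bigr)+(\text{error}),
\]
uniform over $s\ge 1/t=\sqrt{2(1+\eta)\log(n/d)}/(1-\delta)$. The Gaussian part gives
\[
    \exp\bigl(-(1-\delta')(1+\eta)\log(n/d)/(1-\delta)^2\bigr)\le (n/d)^{-(1+\eta)}
\]
once $\delta'$ is small relative to $\delta$. The integral over $u$ converges and contributes only a constant, because the exponent grows like $(1+u)^2$.

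\textbf{Sub-Weibull case.} I would invoke the moderate-deviation or Bernstein-type inequality for weighted sums of independent $\psi_\alpha$ variables collected in Appendix~\ref{sec:appendix_tools}. I expect it to give the following bound for $s$ in the relevant range, where $\max_i|h_iA_{ij}|$ has $\psi_\alpha$-norm at most $K\|h\|_\infty$:
\[
    \mathbb P(S>s)\le \exp\bigl(-(1-\delta')s^2/2\bigr)+\exp\bigl(-c(s/(K\|h\|_\infty))^\alpha\bigr).
\]
If the appendix only supplies a cruder Bernstein constant, I would instead do a truncation argument. Truncate each $h_iA_{ij}$ at level $\lambda\sim\delta' /s$. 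The truncated part then has an mgf bounded by $\exp((1+\delta')\theta^2/2)$ for $\theta\approx s$, which gives the sharp Gaussian constant. For the large-jump part, a union bound gives $\sum_i\mathbb P(|h_iA_{ij}|>\lambda)$, and I control this via the sub-Weibull tails. With $\|h\|_\infty\le C_0^{-1}[\log(n/d)]^{-1/\alpha}$ and $s^2\sim\log(n/d)$, the quantity $(s/\|h\|_\infty)^\alpha$ is at least of order $C_0^\alpha\log(n/d)$ times a power of $s$. Choosing $C_0$ large therefore makes the second term at most $(n/d)^{-(1+\eta)}$, and it stays integrable in $u$.

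\textbf{Finite-moment case.} Here I would use the same truncation at a level $\lambda\asymp 1/s$. On the truncated event, Bernstein or mgf bounds again give the sharp Gaussian tail. For the remainder, I would use Markov at order $p$ on the large part, which gives
\[
    \mathbb P\Bigl(\sum_i h_iA_{ij}\mathbf 1_{\{|h_iA_{ij}|>\lambda\}}>\delta' s\Bigr)\lesssim \sum_i\frac{K^p|h_i|^p}{\lambda^p}\cdot(\text{poly}(s)) = K^p\|h\|_p^p\,\text{poly}(s).
\]
A Rosenthal-type bound is an alternative for the same step. By \eqref{eq:LP_delocalization}, $\|h\|_p^p\le C_0^{-p}(n/d)^{-(1+\eta)}$. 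The polylogarithmic factors in $s$ must be absorbed, which requires either slack in the exponent or treating the heavy part more carefully, for example by bounding $\mathbb E(tS-1)_+^2\mathbf 1_{\{\text{heavy}\}}$ directly through Hölder. The condition $p>4$ leaves room for this, since $\mathbb E S^4<\infty$ uniformly.

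The main obstacle is getting the \emph{sharp} Gaussian constant $1/2$ in the exponent. Any loss there changes the power of $n/d$, and $\delta$ is the only slack available. The step that needs the most care is therefore choosing the truncation level so that the bounded part has variance proxy $1+o(1)$ in the regime $\theta\sim\sqrt{\log(n/d)}$, while the heavy remainder is still $O((n/d)^{-1-\eta})$ after integrating in $u$. Summing over $n$ columns then gives
\[
    \mathbb E\mathcal E\le C_1 n(n/d)^{-1-\eta}=C_1d(n/d)^{-\eta}.
\]
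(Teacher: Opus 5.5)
Your reduction to a single column and the layer-cake formula match the paper. The step that carries the whole proposition, however, is not established: a tail bound for $S=a_j^\top h$ with the sharp constant $1/2$ and an additive error $O((n/d)^{-(1+\eta)})$. The truncation you propose cannot establish it.

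\textbf{Sub-Weibull case.} The union bound over coordinates with $|h_iA_{ij}|>\lambda$, $\lambda\asymp\delta'/s$, involves the exponent $(\lambda/(K|h_i|))^\alpha$, not $(s/\|h\|_\infty)^\alpha$. Under \eqref{eq:L_inf_delocalization} this exponent is only of order $(\delta'C_0)^\alpha[\log(n/d)]^{1-\alpha/2}$. It is bounded when $\alpha=2$ and $o(\log(n/d))$ otherwise. In addition, as many as $\|h\|_\infty^{-2}$ coordinates can have the maximal size. For a concrete example, take Gaussian entries and $h$ uniform on $N=\lceil C_0^2\log(n/d)\rceil$ coordinates. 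Each $|h_iA_{ij}|$ then exceeds $\delta'/s$ with a fixed positive probability, so the event you discard has probability tending to $1$. The decomposition itself fails, not just the estimate.

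\textbf{Finite-moment case.} The same union bound gives $K^p\|h\|_p^p s^p$. This overshoots $(n/d)^{-(1+\eta)}$ by a factor $[\log(n/d)]^{p/2}$. It also grows in $s$, so it is not even integrable against $2u\,du$. Condition \eqref{eq:LP_delocalization} has no slack, and H\"older only loses a power of the probability.

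\textbf{How the paper proceeds.} The paper avoids truncation. It applies the self-normalized moderate deviation theorem (Theorem~\ref{lem:self_normalized}) at level $(1-\delta/2)s$. This needs only third moments and gives
\[
\mathbb P\{S\ge(1-\delta/2)sV\}\le C\exp\bigl[-(1-\delta/2)^2s^2/2+Cs^3\|h\|_\infty\bigr],
\]
with $s\|h\|_\infty$ small by delocalization. It then splits $\mathbb P\{S\ge s\}\le\mathbb P\{S\ge(1-\delta/2)sV\}+\mathbb P\{V>1+\delta/2\}$ and bounds the last term with Lemma~\ref{lem:concentration_V}. This is where the precise form of \eqref{eq:LP_delocalization} and \eqref{eq:L_inf_delocalization} enters. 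Sharpness is needed only at the single threshold $s$. The range $1\le t\le T$ costs at most $(T-1)^2\mathbb P\{S\ge s\}$, and for $t\ge T$ the crude Fuk--Nagaev or generalized Bernstein bounds suffice.

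\textbf{Repairing your route.} If you want to keep truncation, use a small constant level $\lambda_0$ independent of $s$. The union bound then becomes $K^p\lambda_0^{-p}\|h\|_p^p$ in the finite-moment case, and at most $2C_0^2[\log(n/d)]^{2/\alpha}(n/d)^{-(\lambda_0C_0/K)^\alpha}$ in the sub-Weibull case. The truncated log-moment generating function at $\theta\approx s$ is still $(1+o(1))\theta^2/2$. The reason is that the cubic remainder is controlled by $\theta\|h\|_\infty\to0$ together with tail decay that beats $e^{\theta\lambda_0}=(n/d)^{o(1)}$, rather than by $\theta\lambda\to0$.

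In the finite-moment case you could instead keep $\lambda=\delta'/s$ and apply Markov with Rosenthal to the heavy sum itself at level $\delta's$, for $s$ up to a constant multiple of the threshold. That yields $\lesssim\|h\|_p^p(\delta's)^{-p}$ plus a term of order $\|h\|_p^{p^2/2}$ times a power of $s$, with no logarithmic loss.
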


\begin{proof}
Fix a column $a_i$, and write
\[
    s=\frac{\sqrt{2(1+\eta)\log(n/d)}}{1-\delta},
    \qquad S=a_i^\top h.
\]
Since $x_0=h/s$, the tail-integration formula gives
\[
    \mathbb E(a_i^\top x_0-1)_+^2
    =2\int_1^\infty(t-1)\mathbb P\{S\ge st\}\,dt.
\]
We first bound the probability at $s$, and then control the integral over $t\ge T$, where $T>1$ is a sufficiently large constant. All estimates below are uniform in $i$, and constants depend only on the fixed parameters in the proposition. By increasing $C_0$, we may assume $s\ge1$.

\smallskip
\noindent\textbf{Step 1: The probability at the threshold.}
Set
\[
    V=\left(\sum_{j=1}^d h_j^2A_{ji}^2\right)^{1/2}.
\]
Taking $C_0$ sufficiently large makes $s\|h\|_\infty$ as small as needed under either delocalization assumption: use $\|h\|_\infty\le\|h\|_p$ in the finite-moment case and $\alpha\le2$ in the sub-Weibull case.

Apply Theorem~\ref{lem:self_normalized} to the independent summands $h_jA_{ji}$ at $(1-\delta/2)s$. Their total variance is $1$, and
\[
    \sum_{j=1}^d\mathbb E|h_jA_{ji}|^3
    \le C\|h\|_\infty.
\]
Thus the error in the exponent is at most $Cs^3\|h\|_\infty$. The smallness of $s\|h\|_\infty$ verifies the theorem's two size conditions and makes this error smaller than
$\bigl[(1-\delta/2)^2-(1-\delta)^2\bigr]s^2/2$. Consequently,
\begin{equation}\label{eq:near_self_normalized}
\begin{aligned}
    \mathbb P\{S\ge(1-\delta/2)sV\}
    &\le C\exp\left[-\frac{(1-\delta/2)^2s^2}{2}
                     +Cs^3\|h\|_\infty\right]\le C(n/d)^{-(1+\eta)}.
\end{aligned}
\end{equation}

Lemma~\ref{lem:concentration_V}, using $\|h\|_4^4\le\|h\|_\infty^2$ and, in the finite-moment case, $\|h\|_\infty\le\|h\|_p$, gives
\[
    \mathbb P\{V>1+\delta/2\}
    \le
    \begin{cases}
        C\|h\|_p^p+2e^{-c/\|h\|_p^2},&\text{in the finite-moment case},\\
        2e^{-c/\|h\|_\infty^\alpha},&\text{in the sub-Weibull case}.
    \end{cases}
\]
Each bound is at most $C(n/d)^{-(1+\eta)}$ by the corresponding delocalization assumption. Since $(1-\delta/2)(1+\delta/2)<1$, we conclude that
\[
    \mathbb P\{S\ge s\}
    \le\mathbb P\{S\ge(1-\delta/2)sV\}
       +\mathbb P\{V>1+\delta/2\}
    \le C(n/d)^{-(1+\eta)}.
\]

\smallskip
\noindent\textbf{Step 2: The remaining tail.}
In the finite-moment case, the Fuk--Nagaev inequality in Lemma~\ref{lem:concentration}\ref{item:Fuk_Nagaev} gives
\[
    \mathbb P\{S\ge st\}
    \le CK^p\|h\|_p^p(st)^{-p}+e^{-c_ps^2t^2}.
\]
Integrating and using \eqref{eq:LP_delocalization}, we obtain
\[
    2\int_T^\infty t\mathbb P\{S\ge st\}\,dt
    \le CK^p\|h\|_p^ps^{-p}+Cs^{-2}e^{-c_pT^2s^2}
    \le C(n/d)^{-(1+\eta)}
\]
for sufficiently large $T$.

In the sub-Weibull case, the generalized Bernstein inequality in Lemma~\ref{lem:concentration}\ref{item:Bernstein} gives
\[
    \mathbb P\{S\ge st\}\le2e^{-cs^2t^\alpha},
    \qquad t\ge1.
\]
Indeed, for $\alpha\le1$, \eqref{eq:L_inf_delocalization} gives
$K^\alpha\|h\|_\infty^\alpha\le Cs^{-2}\le Cs^{\alpha-2}$; for $\alpha>1$, interpolation gives
\[
    K^\alpha\|h\|_{\alpha/(\alpha-1)}^\alpha
    \le K^\alpha\|h\|_\infty^{2-\alpha}
    \le Cs^{\alpha-2}.
\]
Both terms in the Bernstein minimum are therefore at least $cs^2t^\alpha$, since $t\ge1$ and $\alpha\le2$. Splitting the exponential into two equal factors and using $s\ge1$, we obtain
\[
    2\int_T^\infty t\mathbb P\{S\ge st\}\,dt
    \le C e^{-cT^\alpha s^2/2}
    \le C(n/d)^{-(1+\eta)}
\]
for sufficiently large $T$.

Finally, monotonicity bounds the integral over $1\le t\le T$ by
$(T-1)^2\mathbb P\{S\ge s\}$. Combining the two parts and summing over columns gives, for sufficiently large $C_1$, 
\[
    \mathbb E\mathcal E
    \le C_1n(n/d)^{-(1+\eta)}
    =C_1d(n/d)^{-\eta}.
\]
This completes the proof.
\end{proof}

We next record the projected RIP estimate needed to apply the correction theorem. 

\begin{lemma}[Projected RIP for random matrices, {\cite[Theorem~1]{guedon2014restricted}}]
\label{prop:random_A_RIP}
Let $A\in\R^{d\times n}$ have independent centered unit-variance entries, and let $h\in\Sph^{d-1}$ be deterministic. Fix $0<\theta<1/4$, and assume that $d\ge1+\theta^{-1}$ and $n\ge d-1$.
\begin{enumerate}[label=(\roman*),leftmargin=2em]
\item \textbf{Finite-moment case.} Suppose that $\|A_{ij}\|_{L^p}\le K$ for all $i,j$ and some $p>4$, and fix $0<\kappa<\min\{1,(p-4)/4\}$. There exist constants $C_0$, $C_1>1$, depending only on $p,K,\kappa$, such that the following holds. If
\[
    2^{C_0/(\theta)}
    \le n\le \left(\frac{\theta}{C_0}\right)^{1+p/2}(d-1)^{p/4},
\]
set
\[
    m=\left\lfloor
    \frac{\theta^{\frac{2p}{p-4-2\kappa}}}{C_1}(d-1)
    \left(\frac{n}{d-1}\right)^{-\frac{4+2\kappa}{p-4-2\kappa}}
    \right\rfloor.
\]
Then $P_{h^\perp}A/\sqrt d$ satisfies the RIP of order $m$ with constant at most $2\theta$, with probability at least
\[
    1-2^{-9}\theta-C_0\theta^{-p/2}n(d-1)^{-p/4}.
\]
\item \textbf{Sub-Weibull case.} Suppose that $\|A_{ij}\|_{\psi_\alpha}\le K$ for all $i,j$ and some $\alpha\in(0,2]$. There exists a constant $C_0>1$, depending only on $\alpha,K$, such that the following holds. If
\[
    \frac{C_0}\theta\le n
    \le \frac{\theta}{C_0}
    \exp\left(\frac12\left(\frac{\theta\sqrt{d-1}}{C_0}\right)^\alpha\right),
\]
set
\[
    m=\left\lfloor
    \frac{\theta^2(d-1)}{C_0}
    \left[\log\left(\frac{C_0n}{\theta^2(d-1)}\right)\right]^{-2/\alpha}
    \right\rfloor.
\]
If $m\ge1$, then $P_{h^\perp}A/\sqrt d$ satisfies the RIP of order $m$ with constant at most $2\theta$, with probability at least
\[
    1-2^{-9}\theta
    -2n\exp\left[-\frac1{C_0}\min\left\{\theta^2(d-1),[\theta(d-1)]^{\alpha/2}\right\}\right].
\]
\end{enumerate}
\end{lemma}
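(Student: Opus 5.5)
The plan is to reduce the statement to the RIP theorem of Guédon, Litvak, Pajor and Tomczak-Jaegermann \cite[Theorem~1]{guedon2014restricted} for matrices with independent columns. That result is phrased for a $N\times n$ matrix with i.i.d.-type independent isotropic columns. The main observation is that $P_{h^\perp}A$ is really a $(d-1)$-dimensional object. Fix an orthonormal basis $U\in\R^{d\times(d-1)}$ of $h^\perp$. Then $P_{h^\perp}A=U(U^\top A)$ and $\|P_{h^\perp}Au\|_2=\|U^\top Au\|_2$ for every $u$, so the RIP of $P_{h^\perp}A/\sqrt d$ is equivalent to the RIP of the $(d-1)\times n$ matrix $\Gamma:=U^\top A/\sqrt d$.

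The columns $Y_i:=U^\top a_i$ are independent. Each is isotropic in $\R^{d-1}$, because $\mathbb E\,a_ia_i^\top=I_d$ gives $\mathbb E\,Y_iY_i^\top=U^\top U=I_{d-1}$. This explains why every quantity in the statement involves $d-1$ rather than $d$. The normalization $1/\sqrt d$ instead of $1/\sqrt{d-1}$ is absorbed into the constant: it costs a factor $(d-1)/d=1-1/d\ge1-\theta$ by the assumption $d\ge1+\theta^{-1}$. This is the reason the final RIP constant is $2\theta$ and not $\theta$.

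Next I would verify the two hypotheses of \cite[Theorem~1]{guedon2014restricted} for the columns $Y_i$. The first is a \emph{norm concentration} condition: $\mathbb P\{|\|Y_i\|_2^2/(d-1)-1|>\theta\}$ must be small. The second is a \emph{weak moment / tail} condition on one-dimensional marginals $\langle Y_i,y\rangle=\langle a_i,Uy\rangle$ with $y\in S^{d-2}$. The marginal bound follows from Rosenthal's inequality in the $L^p$ case, giving $\|\langle a_i,w\rangle\|_{L^p}\lesssim_pK$ uniformly over unit $w$. In the $\psi_\alpha$ case it follows from the generalized Bernstein inequality of Lemma~\ref{lem:concentration}, giving $\|\langle a_i,w\rangle\|_{\psi_\alpha}\lesssim K$.

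For the norm, write $\|Y_i\|_2^2=\|a_i\|_2^2-\langle a_i,h\rangle^2$. The term $\|a_i\|_2^2-d$ is a sum of independent centered variables $A_{ji}^2-1$. In the $p$-moment case these have $p/2>2$ moments, so Fuk--Nagaev gives a deviation probability of order $\theta^{-p/2}d^{1-p/2}+e^{-c\theta^2d}$. In the $\psi_\alpha$ case they have $\psi_{\alpha/2}$ tails, and Bernstein gives $2\exp(-c\min\{\theta^2d,(\theta d)^{\alpha/2}\})$. The term $\langle a_i,h\rangle^2$ is $O(1)$ with high probability by the marginal bounds, hence negligible against $\theta(d-1)$. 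A union bound over $n$ columns then produces exactly the failure terms $C_0\theta^{-p/2}n(d-1)^{-p/4}$ and $2n\exp[-\tfrac1{C_0}\min\{\theta^2(d-1),(\theta(d-1))^{\alpha/2}\}]$ in the statement. Matching the exponent $p/4$ requires using the cruder Markov/Rosenthal bound on $\max_i$. I would simply follow the form of the cited theorem's hypothesis here rather than optimize.

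Finally I would plug these parameters into \cite[Theorem~1]{guedon2014restricted}, which yields RIP of order $m$ with constant $\theta$ for $\Gamma\sqrt{d/(d-1)}$. The order $m$ is given by their sparsity formulas: the $(n/(d-1))^{-(4+2\kappa)/(p-4-2\kappa)}$ law in the $p>4$ case, and the $[\log(C_0n/(\theta^2(d-1)))]^{-2/\alpha}$ law in the $\psi_\alpha$ case. Their probability bound, whose residual part is $2^{-9}\theta$, combines with the column-norm failure probability above. The ranges for $n$ in the statement are exactly the ranges under which their theorem applies with these $m$.

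I expect the main obstacle to be bookkeeping rather than ideas. The key check is that the hypotheses of the cited theorem are stated in terms of isotropic columns of dimension $N=d-1$ with the right moment assumption, which the projection trick supplies. The other point needing care is the passage from constant $\theta$ to $2\theta$ under the $\sqrt{d/(d-1)}$ rescaling.
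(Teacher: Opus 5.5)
Your proposal is correct and matches the paper's proof. Both apply the Gu\'edon--Litvak--Pajor--Tomczak-Jaegermann RIP estimate to the independent columns $P_{h^\perp}a_i$ (viewed in $h^\perp\cong\R^{d-1}$, with uniform $L^p$/$\psi_\alpha$ marginal bounds from Rosenthal/Bernstein), control $\max_i\bigl|\|P_{h^\perp}a_i\|_2^2/(d-1)-1\bigr|$ by a union bound, and then use $(d-1)/d\ge1-\theta$ to pass from $\sqrt{d-1}$ to $\sqrt d$ with constant $2\theta$. The only difference is cosmetic: the paper bounds the column norms by Rosenthal plus Markov in the $L^p$ case and by Hanson--Wright applied to $a_i^\top P_{h^\perp}a_i$ in the $\psi_\alpha$ case, whereas your Fuk--Nagaev and Bernstein bounds are at least as strong, so you do not actually need the cruder bound to recover the $(d-1)^{-p/4}$ term.
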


\begin{proof}
Throughout the proof, $C\ge1$ may change from line to line. Write $X_i=P_{h^\perp}a_i$. Rosenthal's inequality and the generalized Bernstein inequality give, in the respective cases,
\[
    \sup_{i,\,u\in h^\perp\cap\Sph^{d-1}}
    \|\langle X_i,u\rangle\|_{L^p}\le C,
    \qquad
    \sup_{i,\,u\in h^\perp\cap\Sph^{d-1}}
    \|\langle X_i,u\rangle\|_{\psi_\alpha}\le C.
\]
We now use the estimate in \cite[Theorem~2.1 and the proof of Theorem~3.1]{guedon2017interval}. For the stated choices of $m$ and ranges of $n$, this estimate shows that $P_{h^\perp}A/\sqrt{d-1}$ satisfies the RIP of order $m$ with constant at most $\theta$, except on an event of probability at most
\[
    2^{-9}\theta+
    \mathbb P\left\{
        \max_i\left|\frac{\|X_i\|_2^2}{d-1}-1\right|>\frac\theta2
    \right\}.
\]
Thus it remains to control $\|X_i\|_2$.

In the finite-moment case,
\[
    \|X_i\|_2^2-(d-1)
    =\sum_{j=1}^d(A_{ji}^2-1)-\bigl((a_i^\top h)^2-1\bigr).
\]
Rosenthal's inequality bounds the $L^{p/2}$ norm of the sum by $C\sqrt d$ and the $L^p$ norm of $a_i^\top h$ by $C$. Hence
\[
    \bigl\|\|X_i\|_2^2-(d-1)\bigr\|_{L^{p/2}}
    \le C\sqrt{d-1}.
\]
Markov's inequality and a union bound give
\[
    \mathbb P\left\{
        \max_i\left|\frac{\|X_i\|_2^2}{d-1}-1\right|>\frac\theta2
    \right\}
    \le C\theta^{-p/2}n(d-1)^{-p/4}.
\]

In the sub-Weibull case, write $\|X_i\|_2^2=a_i^\top P_{h^\perp}a_i$. The projection has operator norm $1$ and squared Hilbert--Schmidt norm $d-1$, so Lemma~\ref{lem:concentration}\ref{item:Hanson_Wright} and a union bound give
\[
    \mathbb P\left\{
        \max_i\left|\frac{\|X_i\|_2^2}{d-1}-1\right|>\frac\theta2
    \right\}
    \le2n\exp\left[-\frac1C\min\left\{\theta^2(d-1),[\theta(d-1)]^{\alpha/2}\right\}\right].
\]

Choosing $C_0$ (and $C_1$ in case~(i)) sufficiently large gives the stated probabilities with normalization $\sqrt{d-1}$. Replacing $\sqrt{d-1}$ by $\sqrt d$ multiplies squared norms by $(d-1)/d\ge1-\theta$, so the lower RIP bound is at least $(1-\theta)^2\ge1-2\theta$, while the upper bound is at most $1+\theta$.
\end{proof}

We can now compare the expected violation energy with the available projected RIP order and apply the correction theorem.

\begin{theorem}[Lower bound for random linear programs]\label{thm:random_lp_lower}
Let $A\in\R^{d\times n}$ have independent centered unit-variance entries, and let $h\in\Sph^{d-1}$ be deterministic. Fix $\eta>0$ and $q\in(0,1)$. There exist constants $C_0,C_1>1$, depending only on $\eta,K,q$ and on $p$ or $\alpha$ in the corresponding case, such that the following holds for $d\ge C_1$ and $n\ge C_1d$. Suppose that one of the following conditions holds.
\begin{enumerate}[label=(\roman*),leftmargin=2em]
\item\label{item:random_lp_lower_finite} \textbf{Finite-moment case.} Suppose that $\|A_{ij}\|_{L^p}\le K$ for all $i,j$ and some $p>4$, with $\eta>4/(p-4)$, and assume that
\[
    \|h\|_p\le C_0^{-1}(n/d)^{-(1+\eta)/p},
    \qquad n\le C_1^{-1}d^{1+1/\eta}.
\]
\item\label{item:random_lp_lower_weibull} \textbf{Sub-Weibull case.} Suppose that $\|A_{ij}\|_{\psi_\alpha}\le K$ for all $i,j$ and some $\alpha\in(0,2]$, and assume that
\[
    \|h\|_\infty\le C_0^{-1}[\log(n/d)]^{-1/\alpha},
    \qquad n\le\exp(C_1^{-1}d^{\alpha/2}).
\]
\end{enumerate}
Then, with probability at least $1-q$,
\[
    z^*(\one,h)\ge\frac{1}{\sqrt{2(1+\eta)\log(n/d)}}.
\]
\end{theorem}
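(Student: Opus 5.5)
We apply Theorem~\ref{thm:main_results} to the rescaled system with matrix $A/\sqrt d$. The plan is to take $x_0$ a slightly larger multiple of $h$ than the target value, show that a tightened violation energy is small in expectation via Proposition~\ref{prop:expected_energy}, and supply projected RIP of a suitable order via Lemma~\ref{prop:random_A_RIP}. Concretely, fix small $\delta'>0$ and $\eta'\in(4/(p-4),\eta)$ in case (i) (any $\eta'<\eta$ in case (ii)) so that
\[
x_0=\frac{1-\delta'}{\sqrt{2(1+\eta')\log(n/d)}}\,h,\qquad h^\top x_0\ge \frac{1}{\sqrt{2(1+\eta)\log(n/d)}},
\]
which is possible since $(1-\delta')^2(1+\eta)>1+\eta'$ for small $\delta'$. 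Feasibility of a point $x'=x_0+z$ with $z\in h^\perp$ then gives the bound $z^*(\one,h)\ge h^\top x_0$. To absorb the $2\varepsilon$ tightening, we choose $\varepsilon$ a small constant, say $\varepsilon=\delta'/4$. Since $(a_i^\top x_0-1+2\varepsilon)_+=(a_i^\top\tilde x_0-1)_+(1-2\varepsilon)$ with $\tilde x_0=x_0/(1-2\varepsilon)$, the tightened energy is a constant multiple of the energy of $\tilde x_0$. We then apply Proposition~\ref{prop:expected_energy} with $\delta=\delta'/2$ and parameter $\eta'$. By Markov, with probability $1-q/2$ the tightened energy is at most $C q^{-1} d (n/d)^{-\eta'}$.

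In the rescaled problem $A/\sqrt d$ acts on $\sqrt d\,x$, so constraint values are unchanged and only norms rescale; the theorem is therefore applied with $B=P_{h^\perp}A/\sqrt d$. We take $\theta$ a small constant with $2\theta<1/2$, for instance $\theta=1/8$, so that $\delta_{2m}\le 1/4$ and the constant on the right side of \eqref{eq:assumption_violation_energy} is an absolute $c$. The requirement then becomes
\[
C q^{-1} d (n/d)^{-\eta'} \le c\,\varepsilon^2 m,
\]
where $2m$ is the RIP order from Lemma~\ref{prop:random_A_RIP}. Taking the RIP failure probability at most $q/2$ then gives probability $1-q$ overall.

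The main obstacle is the order comparison. In case (ii), $m\approx d/\log^{2/\alpha}(n/d)$, and $d(n/d)^{-\eta'}\ll d\log^{-2/\alpha}(n/d)$ once $n/d\ge C_1$. The range $n\le\exp(C_1^{-1}d^{\alpha/2})$ ensures both that the RIP hypotheses hold and that the failure term $2n\exp(-c\,d^{\alpha/2})$ is small. In case (i), $m\approx d\,(n/d)^{-(4+2\kappa)/(p-4-2\kappa)}$, so we need
\[
\eta'>\frac{4+2\kappa}{p-4-2\kappa},
\]
which is achievable by choosing $\kappa$ small, since $\eta'>4/(p-4)$. The RIP failure term $n\,d^{-p/4}$ and the upper range condition must also be handled: with $n\le C_1^{-1}d^{1+1/\eta}$ and $\eta>4/(p-4)$ we get $1+1/\eta<p/4$, so $n d^{-p/4}\to 0$ for large $d$. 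The lower range condition $n\ge 2^{C_0/\theta}$ follows from $n\ge C_1 d$ with $C_1$ large. Slack between $\eta$ and $\eta'$ is used in both places, and the constants must be fixed in the order $\delta',\eta',\kappa,\theta,\varepsilon$, then $C_0,C_1$.
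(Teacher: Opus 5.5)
Your proposal follows the paper's proof essentially step for step. Like the paper, you shrink $\eta$ to $\eta'$ (the paper's $\eta_0$) in Proposition~\ref{prop:expected_energy} and absorb the $2\varepsilon$ tightening by contracting the initial point. You then take $\kappa$ small in Lemma~\ref{prop:random_A_RIP} so that the RIP order $d(n/d)^{-(4+2\kappa)/(p-4-2\kappa)}$ dominates $d(n/d)^{-\eta'}$, and finish with Markov's inequality and a union bound. Your rescaling $y_0=\sqrt d\,x_0$ with a constant $\varepsilon$ is equivalent to the paper's choice $b=\one/\sqrt d$, $\varepsilon=1/(2C_0\sqrt d)$.

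One step as written does not hold for every $q\in(0,1)$. The failure probability guaranteed by Lemma~\ref{prop:random_A_RIP} contains the term $2^{-9}\theta$, which does not decay as $d,n/d\to\infty$. With your instance $\theta=1/8$ the lemma only guarantees failure probability at most $2^{-12}+o(1)$, and this is not below $q/2$ once $q<2^{-11}$. So ``taking the RIP failure probability at most $q/2$'' cannot be achieved just by enlarging $d$ and $n/d$.

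You need $\theta\lesssim q$; the paper takes $\theta=q/8$. This only shrinks the RIP order $m$ by a $q$-dependent factor and makes the RIP constant smaller, and both effects are absorbed by enlarging $C_1$. In your ordering of constants, $\theta$ must therefore be chosen depending on $q$. With that change the argument goes through as you describe.
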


\begin{proof}
Throughout the proof, $C\ge1$ depends only on the parameters in the theorem and may change from line to line.

\smallskip
\noindent\textbf{Step 1: Projected RIP.}
Apply Lemma~\ref{prop:random_A_RIP} with $\theta=q/8$, and let $m$ be half its guaranteed RIP order, rounded down. In case~\ref{item:random_lp_lower_finite}, we choose the auxiliary parameter in the lemma so that, after increasing $C_1$,
\[
    m\ge d(n/d)^{-\frac{\eta+4/(p-4)}{2}}.
\]
This is possible because $\eta>4/(p-4)$. In case~\ref{item:random_lp_lower_weibull}, the bounds $d\ge C_1$ and $\log n\le C_1^{-1}d^{\alpha/2}$ give
\[
    m\ge C^{-1}d[\log(Cn/d)]^{-2/\alpha}.
\]
For sufficiently large $C_1$, the lemma's guaranteed order is at least $4$ and its failure probability is at most $q/2$. Thus, with probability at least $1-q/2$, the matrix $P_{h^\perp}A/\sqrt d$ satisfies the RIP of order $2m$ with constant at most $q/4<1/4$.

\smallskip
\noindent\textbf{Step 2: Initial violation energy.}
Set
\[
    x_0=\frac{1}{\sqrt{2(1+\eta)\log(n/d)}}h,
    \qquad
    S=\sum_{i=1}^n(a_i^\top x_0-1+C_0^{-1})_+^2.
\]
Choose $0<\eta_0<\eta$, with
\[
    \eta_0>\frac{\eta+4/(p-4)}{2}
\]
in case~\ref{item:random_lp_lower_finite}. In Proposition~\ref{prop:expected_energy}, use $\eta_0$ in place of $\eta$ and a sufficiently small fixed contraction parameter. Since $\eta_0<\eta$, we can then increase $C_0$ so that $x_0/(1-C_0^{-1})$ is a contraction toward the origin of the initial point in that proposition. Increasing $C_0$ and $C_1$ also ensures its delocalization and size conditions. Rescaling the threshold and applying the proposition, we obtain
\[
    \mathbb ES\le Cd(n/d)^{-\eta_0}.
\]
Thus
\begin{equation}\label{eq:S/m}
    \frac{\mathbb ES}{m}\longrightarrow0
    \qquad\text{as }n/d\to\infty,
\end{equation}
uniformly over the stated range.

\smallskip
\noindent\textbf{Step 3: Applying the correction theorem.}
Apply Theorem~\ref{thm:main_results} with matrix $A/\sqrt d$, threshold $b=\one/\sqrt d$, and tolerance $\varepsilon=1/(2C_0\sqrt d)$. Since the projected RIP constant is at most $1/4$, the violation-energy condition is implied by
\[
    S\le\frac{m}{36C_0^2}.
\]
By \eqref{eq:S/m} and Markov's inequality, this condition is satisfied with probability at least $1-q/2$ when $d,n/d\ge C_1$, after increasing $C_1$.

A union bound shows that projected RIP and the violation-energy condition hold simultaneously with probability at least $1-q$. On this event, Theorem~\ref{thm:main_results} gives a feasible
point $x'$ with
\[
    h^\top x'=h^\top x_0
    =\frac{1}{\sqrt{2(1+\eta)\log(n/d)}},
\]
which proves the claim.
\end{proof}

\begin{remark}
Sharper failure probabilities for empirical covariance and RIP estimates \cite{tikhomirov2018sample,adamczak2011restricted} may be useful for almost-sure variants of our results. We do not pursue this question here.
\end{remark}

\subsection{Linear programming with random constraints: upper bounds}
\label{sec:random_lp_upper}

We extend the upper bound in \cite[Section~4]{bakhshi2024optimal} to entries with uniformly
bounded $p$th moments for some $p>2$ and weaken the
delocalization condition on $h$. The proof combines a one-sided tail estimate with a count of points determined by active constraints.
\begin{theorem}[Upper bound for random linear programs]
\label{thm:upper_bound}
Assume $A\in\R^{d\times n}$ has independent centered unit-variance entries with bounded $p$th moments, $\|A_{ij}\|_{L^p}\le K$, for some $p>2$. Let $h\in\Sph^{d-1}$ be deterministic. For every $\varepsilon\in(0,1)$, there are constants $C_0,C_1>1$, depending only on $\varepsilon,K, p$, such that
\[
    \sqrt{\log(n/d)}\,\|h\|_\infty\le C_0^{-1},
    \qquad d\ge C_0,\qquad n/d\ge C_0
\]
imply
\begin{equation}\label{eq:random_lp_upper_exponential}
    \mathbb P\left\{
        \sqrt{2\log(n/d)}\,z^*(\one,h)>1+\varepsilon
    \right\}
    \le \exp\left[-C_1^{-1}d(n/d)^{\varepsilon/4}\right].
\end{equation}
\end{theorem}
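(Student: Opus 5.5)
We follow the vertex-counting strategy of \cite[Section~4]{bakhshi2024optimal}. Set $L=\log(n/d)$ and $t=(1+\varepsilon)/\sqrt{2L}$. Almost surely the columns of $A$ are in general position, the polytope $K_A$ is bounded (for $n/d\ge C_0$ this holds with overwhelming probability; on the complement we pay a probability handled separately), and if $z^*(\one,h)>t$ then there is a vertex $x$ of $K_A$ with $h^\top x>t$. Such a vertex is determined by a $d$-subset $J\subset[n]$ with $A_J^\top x=\one$. Moreover all the other $n-d$ constraints satisfy $a_i^\top x\le 1$. The union bound over $J$ then gives
\[
\mathbb P\{z^*>t\}\le \binom nd \max_J \mathbb P\bigl\{h^\top x_J>t,\ a_i^\top x_J\le1\ \forall i\notin J\bigr\},
\]
where $x_J$ solves $A_J^\top x_J=\one$. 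Conditioning on $A_J$, the vector $x_J$ is fixed, and the remaining columns are independent of it. Hence the inner probability is bounded by $\mathbb E\bigl[\mathbf 1\{h^\top x_J>t\}\,(1-\pi(x_J))^{n-d}\bigr]$, where $\pi(x)=\mathbb P\{a^\top x>1\}$.

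The key one-sided tail estimate is a lower bound on $\pi(x)$ for every $x$ with $h^\top x>t$. Since $\pi(x)$ is not monotone in $x$ for general distributions, we would instead argue differently. Since $\binom nd\le (en/d)^d=e^{d(L+1)}$, it suffices to show
\[
(1-\pi)^{n-d}\le \exp(-n\pi/2)\le \exp\bigl[-2d(L+1)-C^{-1}d(n/d)^{\varepsilon/4}\bigr].
\]
This reduces to the requirement $\pi(x)\ge (d/n)^{1-\varepsilon/4}$ for every relevant $x$.

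To obtain that requirement, we decompose $x=sh+y$ with $y\perp h$ and $s>t$. Then $a^\top x=s\,a^\top h+a^\top y$. The projections $a^\top h$ and $a^\top y$ are not independent, but we can use symmetrization. We would try the following. The event $\{s\,a^\top h\ge 1+\gamma\}\cap\{a^\top y\ge-\gamma\}$ has probability at least $\mathbb P\{a^\top h\ge(1+\gamma)/s\}\cdot c$ up to correlation. This needs a lower tail (Cramér-type) bound for $S=a^\top h$ at level $u=(1+\gamma)\sqrt{2L}/(1+\varepsilon)\approx(1-\varepsilon/2)\sqrt{2L}$. By the self-normalized moderate deviation bound (Theorem~\ref{lem:self_normalized}, in its two-sided form) and Lemma~\ref{lem:concentration_V}, this tail is at least $e^{-u^2(1+o(1))/2}\ge (d/n)^{1-\varepsilon/2}$. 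The hypothesis $u\|h\|_\infty\le \sqrt{L}\|h\|_\infty\le C_0^{-1}$ is exactly what is needed here; only third moments of $h_jA_{ji}$ enter, and these are controlled by truncation when $p\in(2,3)$.

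The main obstacle is handling the correlation between $a^\top h$ and $a^\top y$ when $y$ is large or adversarial, because $x_J$ depends on $A_J$ in a complicated way. Our first attempt would be to condition on the coordinates $(A_{ji})_j$ through $S$ and to use a Paley--Zygmund or small-ball estimate: $\mathbb P\{a^\top y\ge 0\mid \cdot\}\ge c$, uniformly over $y$, from the $p>2$ moment assumption. One way to do this is to tilt the distribution of $a$ toward $h$. This costs only a constant factor, which is absorbed because the margin $(n/d)^{\varepsilon/4}$ dominates.

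If that fails, our fallback would be to restrict to $\|x\|_2$ bounded, which holds with high probability via the RIP lower bound on $A_J$. We would then discretize $y$ with a net of size $e^{Cd}$ and absorb the net cardinality into the exponent $d(n/d)^{\varepsilon/4}$, which dominates for $n/d\ge C_0$.
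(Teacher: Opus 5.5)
\textbf{The gap.} The missing step is the one you flag as the main obstacle. You need $\pi(x)\ge(d/n)^{1-\varepsilon/4}$ for \emph{every} deterministic $x=sh+y$ with $s>t$ and $y\perp h$ of arbitrary norm. This uniform one-sided tail bound is the whole probabilistic content of the theorem beyond counting, and the proposal does not prove it.

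Your route multiplies a moderate-deviation lower bound for $a^\top h$ by a constant lower bound for $\mathbb P\{a^\top y\ge-\gamma\mid a^\top h\ge u\}$. It fails at the second factor, because that conditional probability is not bounded below uniformly in $y$ under the stated delocalization. Take a skewed bounded entry law with third cumulant $\kappa_3\ne0$; this is allowed by the hypotheses. Heuristically, conditioning on $\{a^\top h\ge u\}$ with $u\approx\sqrt{2L}$ acts like tilting coordinate $j$ by about $uh_j$. That shifts its mean by $uh_j+\tfrac12\kappa_3u^2h_j^2+\cdots$. Orthogonality $y\perp h$ removes only the first-order term. The drift $\tfrac12\kappa_3u^2\sum_jh_j^2y_j$ can be as negative as $-cL\|h\|_\infty\|y\|_2$: take $h$ with two levels, and $y\perp h$ not orthogonal to $(h_j^2)_j$. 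That is about $L\|h\|_\infty$ standard deviations of $a^\top y$. The hypothesis only gives $L\|h\|_\infty\le\sqrt L/C_0$, which is unbounded, so the conditional probability can tend to $0$. The loss is a small power of $d/n$ and could be absorbed, but proving that needs a controlled change of measure you have not supplied. With only $p>2$ moments, an exponential tilt is not even available.

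\textbf{The fallback does not help.} A $d\times d$ block $A_J$ has no RIP lower bound; its smallest singular value is typically of order $d^{-1/2}$, so this does not control $\|x_J\|_2$. A net is also beside the point: after conditioning on $A_J$, the point $x_J$ is already deterministic. What you need is a bound uniform over deterministic $y$, not a union bound.

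\textbf{The missing idea.} Your remark about tilting toward $h$ points the right way; what is missing is the paper's Proposition~\ref{prop:tail_lower_bound_refined}. It uses a \emph{linear} change of measure $d\mathbb Q/d\mathbb P=\prod_i(1+th_iU_i)$. Here $U_i$ is a truncated, recentered, renormalized version of $\xi_i$ with $\mathbb E[\xi_iU_i]=1$ and $|th_iU_i|\le\delta$. Then $\mathbb E_{\mathbb Q}\xi=th$ \emph{exactly}, with no second-order drift, so $\mathbb E_{\mathbb Q}y^\top\xi=t$ for every $y$ with $h^\top y=1$. Under $\mathbb Q$, the variance of $y^\top\xi$ is $\asymp\|y\|_2^2$ and its centered $p$th moment is $\lesssim\|y\|_2^p$. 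A one-sided Paley--Zygmund-type inequality therefore gives $\mathbb Q\{y^\top\xi\ge t\}\ge c$ regardless of $\|y\|_2$. Hölder's inequality with $\mathbb E F^{1+\delta}\le\exp[C\delta t^2]$ then returns to $\mathbb P$ at cost $\exp[-(1+O(\delta))t^2/2]$. So the Gaussian-type cost comes from the Hölder step, not from ``a constant factor''.

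\textbf{Secondary points.} Vertex enumeration needs $K_A$ to be pointed and bounded outside an event of probability at most $\exp[-C^{-1}d(n/d)^{\varepsilon/4}]$, which you leave unproved. Also, general position fails for atomic entries such as Rademacher. That one is harmless, since a vertex is still cut out by $d$ linearly independent active constraints. The paper avoids the boundedness issue entirely: it takes the minimum-norm point of the closed convex set $K_A\cap\{h^\top x=\lambda\}$, which is determined by at most $d-1$ active constraints. It then conditions and counts essentially as you do.
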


For $\lambda>0$, consider the hyperplane $\{x:h^\top x=\lambda\}$. We first obtain a lower bound on the probability of violating one constraint at one point. Then we show that, if the hyperplane contains a feasible point, its minimum-norm feasible point is determined by at most $d-1$ active constraints.

Related lower bounds on tail probabilities of subgaussian variables appear in \cite[Propositions~2.6 and~4.2]{bakhshi2024optimal}, where a change of measure is used in the proof of Proposition~2.6. 
\begin{proposition}[One-sided tail bound]
\label{prop:tail_lower_bound_refined}
Let $p>2$, and let $\xi=(\xi_1,\ldots,\xi_d)$ have independent centered unit-variance coordinates with $\|\xi_i\|_{L^p}\le K$ for all $i$. For every $\varepsilon\in(0,1)$, there is a constant $C_0>1$, depending only on $\varepsilon,p,K$, such that
\begin{equation}\label{eq:deloc_cond}
    t\ge C_0,\qquad h\in\Sph^{d-1},\qquad
    t\|h\|_\infty\le C_0^{-1}
\end{equation}
imply
\begin{equation}\label{eq:upper_slice_tail}
    \mathbb P\{y^\top\xi\ge t\}
    \ge \exp\left[-(1+\varepsilon)\frac{t^2}{2}\right]
    \qquad\text{for every }y\in\R^d\text{ with }h^\top y=1.
\end{equation}
\end{proposition}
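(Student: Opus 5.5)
The plan is to reduce to the direction $h$ by writing $y=h+w$ with $w\in h^\perp$, which is possible because $h^\top y=1$. Then $y^\top\xi=h^\top\xi+w^\top\xi$. Fix a small $\varepsilon'=\varepsilon'(\varepsilon)>0$. It suffices to show
\[
    \mathbb P\bigl\{h^\top\xi\ge(1+\varepsilon')t,\ w^\top\xi\ge-\varepsilon't\bigr\}
    \ge \exp\left[-(1+\varepsilon)\frac{t^2}{2}\right].
\]
I will prove this by an exponential change of measure in the direction $h$. The finite $p$th moments are made compatible with the tilt by truncation.

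\emph{Step 1: truncation and tilt.} Let $\tilde\xi_j=\xi_j\mathbf 1\{|h_j\xi_j|\le 1/(C t)\}$; the truncation level is admissible because $t\|h\|_\infty\le C_0^{-1}$. Since $p>2$, the truncation changes the mean and variance of $\sum_j h_j\xi_j$ only by $o(1)$, and the event $\{\tilde\xi\ne\xi\}$ has small probability. Define $Q$ by $dQ/dP\propto\exp(\lambda\sum_j h_j\tilde\xi_j)$ with $\lambda\approx(1+2\varepsilon')t$. Each summand satisfies $\lambda|h_j\tilde\xi_j|\le 1/C$, so a second-order cumulant expansion gives the following.
\begin{itemize}
    \item The log-partition function is $\lambda^2/2\,(1+O(1/C))$.
    \item Under $Q$, $h^\top\tilde\xi$ has mean $\lambda(1+O(1/C))$ and variance $1+O(1/C)$.
    \item Under $Q$ the coordinates remain independent.
\end{itemize}
Restrict to the window $h^\top\tilde\xi\in[(1+\varepsilon')t,(1+3\varepsilon')t]$. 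By Chebyshev under $Q$, this window has $Q$-probability at least $1/2$ once $t\ge C_0$. Undoing the tilt costs at most $\exp[-(1+\varepsilon/2)t^2/2]$ for suitable $\varepsilon'$ and $C$.

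\emph{Step 2: the transverse part.} It remains to show that, under $Q$ and jointly with the window, $w^\top\xi\ge-\varepsilon' t$ with $Q$-probability bounded below by a constant. First I control the $Q$-mean. We have $\mathbb E_Q\tilde\xi_j=\lambda h_j+O(\lambda^2h_j^2\,\mathbb E|\xi_j|^3\wedge\cdot)$ plus a truncation error. The leading term $\lambda\sum_j w_jh_j$ vanishes because $w\perp h$, so the $Q$-mean of $w^\top\tilde\xi$ is bounded by $C\lambda^2\|h\|_\infty\|w\|_2\le C' t\,C_0^{-1}\|w\|_2$. This is at most $\varepsilon't/4$ when $\|w\|_2$ is bounded by a constant. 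In that case Chebyshev under $Q$ finishes, since the $Q$-variance of $w^\top\xi$ is $\lesssim\|w\|_2^2$.

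\emph{Main obstacle: large $\|w\|_2$.} When $\|w\|_2\gg 1$, the one-sided event $\{w^\top\xi\ge-\varepsilon't\}$ cannot be handled by Chebyshev. It also has no automatic anti-concentration, since skewed coordinates with large weight are possible. My first attempt is to split the coordinates of $w$ into a small part, $|w_j|\le\tau$, and a large part.
\begin{itemize}
    \item For the small part I use a Berry--Esseen/Lindeberg bound for independent summands with $p>2$ moments under $Q$. This gives $Q$-probability close to $1/2$ that the centered sum is nonnegative, uniformly in the variance.
    \item For the finitely many large coordinates, where $\sum_j w_j^2\le\|w\|_2^2$ but their number is at most $\|w\|_2^2/\tau^2$, I instead tilt along $y/\|y\|_2^2$, so that the required deviation $t/\|y\|_2$ becomes small. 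I then use that $P\{\xi_j\ge0\}$ is bounded below, since $\xi_j$ is centered with unit variance and bounded $p$th moment (Paley--Zygmund).
\end{itemize}
The delicate point is to interpolate between these two regimes with a cost of only $e^{-o(t^2)}$. I expect this step to require the most care.
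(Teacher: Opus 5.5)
\textbf{Verdict: there is a genuine gap.} Your reduction $y=h+w$ and the case of bounded $\|w\|_2$ are fine. The gap is exactly the large-$\|w\|_2$ case you flag, and the remedy you sketch would not close it.

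\textbf{Why your tilt fails there.} Your exponential tilt only gives $\mathbb E_Q\xi_j=\lambda h_j+e_j$. The error $e_j$ has two sources.
\begin{itemize}
\item The third cumulant contributes roughly $\lambda^2h_j^2\,\mathbb E\xi_j^3/2$.
\item The asymmetric truncation contributes a further term, since $y^\top\xi$ involves the untruncated coordinates.
\end{itemize}
Paired with a long $w\in h^\perp$, the drift $\sum_j w_je_j$ is not an artifact of a loose bound. Take flat $h$, $t=\sqrt d/C_0$, skewness $\mathbb E\xi_j^3=\pm\mu$ split evenly, and $w$ proportional to $(h_j^2\,\mathbb E\xi_j^3)_j$. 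Then $w\perp h$, and the drift is about $-t\mu\|w\|_2/(2C_0)$. That is roughly $t/C_0$ standard deviations of $w^\top\xi$ against you, and $t/C_0$ is unbounded.

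So for $\|w\|_2\gg C_0\varepsilon'$, the event $\{w^\top\xi\ge-\varepsilon't\}$ can have $Q$-probability exponentially small in $(t/C_0)^2$. Berry--Esseen has only additive error, and Paley--Zygmund applied to a few large coordinates gives constants. Neither can produce a lower bound for such a probability. You would need a second moderate-deviation lower bound for $w^\top\xi$ under $Q$, that is, another change of measure. The ``interpolation at cost $e^{-o(t^2)}$'' you leave open is precisely this missing argument.

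A smaller point: your claim that $\{\tilde\xi\ne\xi\}$ has small probability is false when $t\|h\|_\infty\asymp C_0^{-1}$ and $d$ is large. The number of truncated coordinates is then of order $d(C/C_0)^p$. What is true, and all you need, is that $h^\top(\xi-\tilde\xi)$ is small compared with $\varepsilon't$.

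\textbf{The missing idea: match the mean exactly.} Make the change of measure reproduce the mean exactly, so no transverse drift exists at all. The paper uses the bounded linear density $F=\prod_i(1+th_iU_i)$. Here $U_i$ is a centered, bounded function of the truncated $\xi_i$, normalized so that $\mathbb E[\xi_iU_i]=1$ with the untruncated $\xi_i$. Since $|th_iU_i|\le\delta$, $F$ is a valid density, and $\mathbb E_Q\xi=th$ exactly.

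Consequently $\mathbb E_Q y^\top\xi=t$ for every $y$ with $h^\top y=1$ at once, however large $\|y\|_2$ is. No splitting of $y$ and no window are needed. The variable $Z=y^\top\xi-t$ is $Q$-centered, with $\mathbb E_QZ^2\ge\frac12\|y\|_2^2$ and, by Rosenthal, $\mathbb E_Q|Z|^p\le C\|y\|_2^p$. A one-sided Paley--Zygmund-type inequality then gives $Q\{y^\top\xi\ge t\}\ge c(p,K)$, uniformly in $y$.

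Returning to $\mathbb P$ uses H\"older, $Q(E)\le(\mathbb E F^{1+\delta})^{1/(1+\delta)}\,\mathbb P(E)^{\delta/(1+\delta)}$, together with $\mathbb E F^{1+\delta}\le\exp[\delta(1+\delta)t^2/(2(1-\delta)^2)]$. This costs only $\exp[-(1+O(\delta))t^2/2]$.

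If you prefer to keep your exponential tilt, you must at least retune it coordinatewise so that $\mathbb E_Q\xi_j=th_j$ holds exactly for the untruncated coordinates. After that, the one-sided moment inequality handles all $y$ at once.
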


\begin{proof}[Proof of Proposition~\ref{prop:tail_lower_bound_refined}]
Set $\delta=\varepsilon/16$. Throughout the proof, $C\ge1$ depends only on $p,K$ and may change from line to line. Write $\mathbb P$ for the original distribution of $\xi$. We first construct a product distribution $\mathbb Q$ with mean $th$ and then use H\"older's inequality to return to $\mathbb P$.

\smallskip
\noindent\textbf{Step 1: Shift the mean to $th$.}
Let
\[
    L=\left(\frac{K^p}{\delta}\right)^{1/(p-2)},
    \qquad
    U_i=\frac{\xi_i\mathbf1_{\{|\xi_i|\le L\}}
        -\mathbb E[\xi_i\mathbf1_{\{|\xi_i|\le L\}}]}
        {\mathbb E[\xi_i^2\mathbf1_{\{|\xi_i|\le L\}}]}.
\]
The denominator is at least $1-\delta$, since
\[
    \mathbb E[\xi_i^2\mathbf1_{\{|\xi_i|>L\}}]
    \le K^pL^{2-p}=\delta.
\]
Thus
\begin{equation}\label{eq:upper_bounded_scores}
    \mathbb EU_i=0,\qquad
    \mathbb E[\xi_iU_i]=1,\qquad
    \mathbb EU_i^2\le\frac1{1-\delta},\qquad
    |U_i|\le\frac{2L}{1-\delta}.
\end{equation}
Choose $C_0$ large enough that $t\|h\|_\infty\le C_0^{-1}$ implies $|th_iU_i|\le\delta$ and $|th_i|\le\sqrt\delta$. Define
\begin{equation}\label{eq:upper_change_of_measure}
    F=\frac{d\mathbb Q}{d\mathbb P}
    :=\prod_{i=1}^d(1+th_iU_i).
\end{equation}
Each factor is positive and has expectation one, so $\mathbb Q$ is a product probability distribution. Moreover,
\[
    \mathbb E_{\mathbb Q}\xi_i
    =\mathbb E[\xi_i(1+th_iU_i)]=th_i.
\]

\smallskip
\noindent\textbf{Step 2: Bound the tail under $\mathbb Q$.}
Each coordinate density lies between $1-\delta$ and $1+\delta$. Together with $|th_i|^2\le\delta$, this gives
\[
    \tfrac12\le\operatorname{Var}_{\mathbb Q}(\xi_i)\le2,
    \qquad
    \mathbb E_{\mathbb Q}|\xi_i-th_i|^p\le C.
\]
Fix $y$ with $h^\top y=1$ and write $Z=y^\top\xi-t$. Independence and Rosenthal's inequality~\cite[Eq.~(7)]{pinelis2015exact} give
\[
    \mathbb E_{\mathbb Q}Z^2\ge\tfrac12\|y\|_2^2,
    \qquad
    \mathbb E_{\mathbb Q}|Z|^p\le C\|y\|_2^p,
\]
where the second bound uses $\|y\|_p\le\|y\|_2$. The one-sided moment inequality in \cite[Remark~2.4]{veraar2008lower} therefore gives
\begin{equation}\label{eq:upper_biased_tail}
    \mathbb Q\{y^\top\xi\ge t\}\ge C_1^{-1},
\end{equation}
where $C_1>1$ depends only on $p,K$.

\smallskip
\noindent\textbf{Step 3: Return to $\mathbb P$.}
For $|u|\le\delta$, Taylor's inequality gives
\[
    (1+u)^{1+\delta}
    \le1+(1+\delta)u+
    \frac{\delta(1+\delta)}{2(1-\delta)}u^2.
\]
Using independence, \eqref{eq:upper_bounded_scores}, and $\|h\|_2=1$, we obtain
\[
    \mathbb E_{\mathbb P}F^{1+\delta}
    =\prod_i\mathbb E(1+th_iU_i)^{1+\delta}
    \le\exp\left[\frac{\delta(1+\delta)}{2(1-\delta)^2}t^2\right].
\]
H\"older's inequality gives
\[
    \mathbb Q\{y^\top\xi\ge t\}
    \le\bigl(\mathbb E_{\mathbb P}F^{1+\delta}\bigr)^{1/(1+\delta)}
       \mathbb P\{y^\top\xi\ge t\}^{\delta/(1+\delta)}.
\]
Combining these bounds with \eqref{eq:upper_biased_tail}, we conclude that
\[
\begin{aligned}
    \mathbb P\{y^\top\xi\ge t\}
    &\ge C_1^{-(1+\delta)/\delta}
       \exp\left[-\frac{1+\delta}{(1-\delta)^2}\frac{t^2}{2}\right]\\
    &\ge C_1^{-(1+\delta)/\delta}
       \exp\left[-(1+\varepsilon/2)\frac{t^2}{2}\right]\ge\exp\left[-(1+\varepsilon)\frac{t^2}{2}\right].
\end{aligned}
\]
The second inequality uses $\delta=\varepsilon/16$, and the last holds for $t\ge C_0$ after increasing $C_0$. All choices depend only on $\varepsilon,p,K$, so the bound is uniform over all $y$ with $h^\top y=1$.
\end{proof}
The delocalization condition concerns $h$; the norm $\|y\|_2$ may be arbitrarily large. The next lemma counts the candidate points on the hyperplane. It is a sample-compression argument; see \cite{margellos2015compression} for its connection with random convex optimization.

\begin{lemma}[Counting active constraints]
\label{lem:active_set_slice_bound}
Let $n\ge d\ge2$, and let $a_1,\ldots,a_n\in\R^d$ be independent. Fix $h\in\Sph^{d-1}$ and $\lambda>0$, and write
\[
    K_A=\{x\in\R^d:a_i^\top x\le1\text{ for all }i\},
    \qquad H=\{x\in\R^d:h^\top x=\lambda\}.
\]
Suppose that, for some $q\in(0,1)$,
\begin{equation}\label{eq:slice_uniform_violation}
    \mathbb P\{a_i^\top x>1\}\ge q
    \qquad\text{for every deterministic }x\in H\text{ and every }i.
\end{equation}
Then
\begin{equation}\label{eq:active_set_slice_probability}
\begin{aligned}
    \mathbb P\{K_A\cap H\ne\varnothing\}
    &\le \sum_{k=0}^{d-1}\binom nk(1-q)^{n-k}\le \left(\frac{en}{d-1}\right)^{d-1}e^{-q(n-d+1)}.
\end{aligned}
\end{equation}
\end{lemma}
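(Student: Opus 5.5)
We plan to use a sample-compression argument. The aim is to show that, whenever $K_A\cap H$ is nonempty, there is a canonical point of $K_A\cap H$ that is determined by a set of at most $d-1$ constraints, and that this point then satisfies all the remaining constraints.

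\textbf{Step 1: a canonical point.} On the event $K_A\cap H\ne\varnothing$, let $x^*$ be the minimum-norm point of the closed convex set $K_A\cap H$. It exists and is unique because the norm is strictly convex. The KKT conditions for minimizing $\|x\|_2^2/2$ subject to $h^\top x=\lambda$ and $a_i^\top x\le1$ (Slater is not needed for linear constraints) give
\[
x^*=\mu h-\sum_{i\in S}\nu_i a_i,\qquad \nu_i\ge0,
\]
with $S$ contained in the active set. By Carathéodory's theorem for cones, applied to $\sum_i\nu_i a_i$ modulo $\operatorname{span}(h)$ (a space of dimension $d-1$), we may assume $|S|\le d-1$ and that the vectors $\{P_{h^\perp}a_i\}_{i\in S}$ are linearly independent.

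Then $x^*$ is also the minimum-norm point of the relaxed set $\{x:h^\top x=\lambda,\ a_i^\top x\le1\ \forall i\in S\}$. Indeed, the same KKT certificate applies, and the problem is convex. Define $x_S$ to be the minimum-norm point of this relaxed set, which is a deterministic function of $(a_i)_{i\in S}$ whenever the set is nonempty. We conclude that the event $K_A\cap H\ne\varnothing$ is contained in
\[
\bigcup_{|S|\le d-1}\bigl\{x_S\text{ exists and }a_j^\top x_S\le1\ \forall j\notin S\bigr\}.
\]

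\textbf{Step 2: conditioning.} Fix $S$ and condition on $(a_i)_{i\in S}$. Then $x_S\in H$ is deterministic, and the $a_j$ with $j\notin S$ are independent of it. By \eqref{eq:slice_uniform_violation}, each $j\notin S$ satisfies $a_j^\top x_S\le1$ with conditional probability at most $1-q$, so by independence the conditional probability of the event above is at most $(1-q)^{n-|S|}$. A union bound over the $\binom nk$ sets of size $k$, for $k\le d-1$, gives the first inequality in \eqref{eq:active_set_slice_probability}.

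\textbf{Step 3: the sum.} Bound $(1-q)^{n-k}\le(1-q)^{n-d+1}\le e^{-q(n-d+1)}$, and use $\sum_{k\le d-1}\binom nk\le(en/(d-1))^{d-1}$, which holds since $n\ge d-1$. This gives the second inequality.

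\textbf{Main obstacle.} The delicate point is making Step 1 rigorous. We must ensure that the reduction to $|S|\le d-1$ is compatible with the KKT certificate, and that $x_S$ is a well-defined measurable function of $(a_i)_{i\in S}$ alone. Using the unique minimum-norm point handles both issues: uniqueness removes any tie-breaking ambiguity, and Carathéodory applied to the multiplier vector preserves optimality, because any nonnegative representation of the same vector $x^*-\mu h$ is a valid certificate.
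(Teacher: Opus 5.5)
Your proof is correct and has the same sample-compression skeleton as the paper's. That skeleton is: take the unique minimum-norm point of $K_A\cap H$, reduce to at most $d-1$ determining constraints, condition on those columns, and take a union bound. Your Steps 2 and 3 match the paper's essentially verbatim; the difference is in Step 1.

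\textbf{The paper's Step 1.} The paper uses no multipliers. It observes that $\widehat x\pm su$ stays feasible for small $s$ whenever $u\in h^\perp$ is orthogonal to the active $a_j$, so $\widehat x\perp u$. It then picks $I\subset J$ with $\{P_{h^\perp}a_i\}_{i\in I}$ a \emph{linear} basis of the span of the active projected vectors. Its candidate is the minimum-norm point of the \emph{affine} set $\{x\in H:a_i^\top x=1,\ i\in I\}$, with fallback $\lambda h$. That point is given by a linear formula and is always defined.

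\textbf{Your Step 1.} Your candidate is instead the minimum-norm point of the inequality-relaxed polyhedron. Since KKT sufficiency there needs $\nu\ge0$, you genuinely need conic Carath\'eodory, because a plain basis selection could produce negative coefficients. You also need KKT necessity for affine constraints.

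\textbf{Comparison.} Your route gives an explicit optimality certificate for $x_S$. The paper's route is more elementary and involves no sign bookkeeping.

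One small imprecision: Carath\'eodory applied modulo $\operatorname{span}(h)$ re-represents $P_{h^\perp}x^*$, so $\mu$ changes to some $\mu'$. The new identity $x^*=\mu'h-\sum_{i\in S}\nu_i'a_i$ is still a valid certificate. However, it is not a representation of the same vector $x^*-\mu h$, as your final paragraph states.
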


\begin{proof}
\textbf{Step 1: Define the candidate points.}
For each $I\subset[n]$ with $|I|\le d-1$, let $x_I$ be the minimum-norm point of
\begin{equation}\label{eq:active_set_candidate}
    \{x\in H:a_i^\top x=1\text{ for all }i\in I\}.
\end{equation}
If this set is empty, set $x_I=\lambda h$. Thus $x_I$ always lies in $H$ and depends only on the columns indexed by $I$.

\smallskip
\noindent\textbf{Step 2: Find a feasible candidate.}
Suppose $K_A\cap H$ is nonempty. As a closed convex set, it has a unique minimum-norm point $\widehat x$. Let $J$ index the constraints active at $\widehat x$, so $a_j^\top\widehat x=1$ for $j\in J$. If $J=\emptyset$, then of course $\widehat{x}=\lambda h$.

Now assume $J\ne \emptyset$. If $u\in h^\perp$ satisfies $a_j^\top u=0$ for all $j\in J$, then $\widehat x\pm su$ remain feasible for sufficiently small $s>0$: the active equalities are preserved, and the other constraints have strictly positive slack. Minimality therefore gives
\[
    \widehat x^\top u=0.
\]
Choose $I\subset J$ so that $\{P_{h^\perp}a_i:i\in I\}$ is a basis for $\operatorname{span}\{P_{h^\perp}a_j:j\in J\}$. Then $|I|\le d-1$. Every point in \eqref{eq:active_set_candidate} has the form $\widehat x+u$, where $u$ satisfies the same conditions as above. Hence
\[
    \|\widehat x+u\|_2^2
    =\|\widehat x\|_2^2+\|u\|_2^2
    \ge\|\widehat x\|_2^2.
\]
Thus $\widehat x=x_I$, so at least one candidate is feasible. 

\smallskip
\noindent\textbf{Step 3: Condition and count.}
For a fixed $I$, condition on the columns $(a_i)_{i\in I}$. The point $x_I$ is then fixed, while the remaining columns are independent. By \eqref{eq:slice_uniform_violation},
\[
    \mathbb P\left\{a_j^\top x_I\le1\text{ for all }j\notin I
        \,\middle|\,(a_i)_{i\in I}\right\}
    \le(1-q)^{n-|I|}.
\]
A feasible candidate must satisfy all these remaining constraints. Taking a union bound over $I$ gives the first inequality in \eqref{eq:active_set_slice_probability}. The second follows from
\[
    (1-q)^{n-k}\le e^{-q(n-d+1)}
    \quad(0\le k\le d-1),
    \qquad
    \sum_{k=0}^{d-1}\binom nk\le\left(\frac{en}{d-1}\right)^{d-1}.
\]
\end{proof}

\begin{proof}[Proof of Theorem~\ref{thm:upper_bound}]
\textbf{Step 1: Bound one violation probability.}
Set
\[
    \lambda=\frac{1+\varepsilon}{\sqrt{2\log(n/d)}},
    \qquad t=\sqrt{\frac{2\log(n/d)}{1+\varepsilon}},
\]
and let $H=\{x\in\R^d:h^\top x=\lambda\}$. For each fixed $x\in H$, we have $h^\top(x/\lambda)=1$. Proposition~\ref{prop:tail_lower_bound_refined}, with error parameter $\varepsilon/2$, applies when $C_0$ is sufficiently large. Since $\lambda t=\sqrt{1+\varepsilon}>1$, it gives
\[
\begin{aligned}
    \mathbb P\{a_i^\top x>1\}
    &\ge \mathbb P\{a_i^\top(x/\lambda)\ge t\}\\
    &\ge (n/d)^{-(1+\varepsilon/2)/(1+\varepsilon)}
     \ge (n/d)^{-1+\varepsilon/4}
\end{aligned}
\]
for every $i$, where the last inequality uses $0<\varepsilon<1$.

\smallskip
\noindent\textbf{Step 2: Bound the feasibility probability.}
Applying Lemma~\ref{lem:active_set_slice_bound} with the preceding violation bound gives
\[
\begin{aligned}
    \mathbb P\{K_A\cap H\ne\varnothing\}
    &\le\exp\left[(d-1)\log\frac{en}{d-1}
                 -(n-d+1)(n/d)^{-1+\varepsilon/4}\right]\\
    &\le\exp\left[d\log(2en/d)-\tfrac12d(n/d)^{\varepsilon/4}\right]\\
    &\le\exp\left[-\tfrac14d(n/d)^{\varepsilon/4}\right].
\end{aligned}
\]
The second inequality uses $d\ge2$ and $n\ge2d$. The last follows by taking $C_0$ large enough that $\log(2en/d)\le\frac14(n/d)^{\varepsilon/4}$ whenever $n/d\ge C_0$.

\smallskip
\noindent\textbf{Step 3: Objective value.}
If $z^*(\one,h)>\lambda$, some feasible point has objective value greater than $\lambda$. Since $0\in K_A$ and $K_A$ is convex, scaling that point toward the origin gives a feasible point in $H$. Thus
\[
    \{z^*(\one,h)>\lambda\}
    \subseteq\{K_A\cap H\ne\varnothing\}.
\]
The preceding bound proves \eqref{eq:random_lp_upper_exponential} with $C_1\ge4$.
\end{proof}

\subsection{Gaussian width under RIP}

Recall that the Gaussian width of $T$ is
\[
    w(T)=\mathbb E_{g\sim N(0,I_d)}\sup_{x\in T}g^\top x;
\]
see~\cite{vershynin2018high}. We keep $A$ fixed and apply Theorem~\ref{thm:main_results} to a uniformly random objective direction to bound $w(K_A)$ from below.

\begin{corollary}[Gaussian width under RIP]\label{cor:spherical_mean_RIP}
Let $m\in\mathbb N$ and $A\in\R^{d\times n}$, and suppose that $A/\sqrt d$ satisfies the RIP of order $2m$ with constant $\delta<1/2$.
For every $\varepsilon\in(0,1)$, there is a constant $C>1$, depending only on $\varepsilon$ and $\delta$, such that, whenever $n\ge Cd$ and $d\ge C$,
\[
    w(K_A)\ge
    (1-\varepsilon)\sqrt{
    \frac{d}{2(1+\delta)\Big(
    \log(n/d)
    +\left[\log\left(\frac{Cd}{m\log(n/d)}\right)\right]_+
    \Big)}}.
\]
\end{corollary}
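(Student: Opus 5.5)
The plan is to reduce the Gaussian width to a random linear program with a uniformly random objective direction, and then apply Theorem~\ref{thm:main_results} for most directions.

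\textbf{Reduction.} Write $g=\|g\|_2\,h$ with $h$ uniform on $\Sph^{d-1}$ and independent of $\|g\|_2$. Since $z^*(\one,h)\ge0$ (because $0\in K_A$), we get
\[
w(K_A)=\mathbb E\|g\|_2\;\mathbb E_h z^*(\one,h)\ge(1-o(1))\sqrt d\,\lambda\,\mathbb P_h\{z^*(\one,h)\ge\lambda\}.
\]
So it suffices to show that $z^*(\one,h)\ge\lambda$ with probability at least $1-\varepsilon/2$ over $h$. The target level is
\[
\lambda=(1-\varepsilon/2)\sqrt{\frac{1}{2(1+\delta)L}},\qquad L:=\log(n/m'),
\]
where $m'\le m$ is a reduced RIP order chosen below. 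The bracket in the statement arises from $\log(n/m')=\log(n/d)+\log(d/m')$.

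\textbf{Step 1: projected RIP for most $h$.} I will not use a random-matrix result here. Instead I use the identity
\[
\|P_{h^\perp}A u\|_2^2=\|Au\|_2^2-(h^\top Au)^2 .
\]
Normalizing by $d$, the upper RIP bound is inherited. The lower bound for $2m'$-sparse $u$ drops by at most $\max_{|I|\le2m'}\|A_I^\top h\|_2^2/d$. RIP of order $1$ gives $\|a_i\|_2^2\le(1+\delta)d$. Hence $a_i^\top h$ is a one-dimensional spherical marginal: it is sub-Gaussian with variance proxy at most $(1+\delta)$ in the $\sqrt d$-normalized scale, and there are no independence issues across $i$ beyond a union bound. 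The sum of the $2m'$ largest values of $(a_i^\top h)^2$ is then $\lesssim m'\log(en/m')$ with probability $1-\varepsilon/4$. Choose
\[
m'=\min\bigl\{m,\;c\,d/\log(n/d)\bigr\}
\]
with $c=c(\varepsilon,\delta)$ small. Then this sum is at most $\kappa d$, so $P_{h^\perp}A/\sqrt d$ has RIP of order $2m'$ with constant $\delta+\kappa<1/2$. Note that $\log(n/m')$ is then at most $\log(n/d)+[\log(Cd/(m\log(n/d)))]_+$.

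\textbf{Step 2: violation energy and correction.} Take $x_0=\lambda' h$ with $\lambda'$ slightly larger than $\lambda$, and apply Theorem~\ref{thm:main_results} to $A/\sqrt d$, $b=\one/\sqrt d$, with tolerance $\varepsilon'\asymp\varepsilon/\sqrt d$. The tightened energy is
\[
\sum_i\bigl(\lambda' a_i^\top h/\sqrt d-(1-2\varepsilon'\sqrt d)/\sqrt d\bigr)_+^2 .
\]
Each term has expectation of order $\exp\bigl(-(1-O(\varepsilon))^2/(2(1+\delta)\lambda'^2)\bigr)$, by the spherical-cap tail for $a_i^\top h$ with $\|a_i\|_2^2\le(1+\delta)d$ (a cap bound, not a CLT). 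With the choice of $\lambda$, the total expectation is at most $n(n/m')^{-(1+\eta)}\ll \varepsilon^2 m'$ for some $\eta=\eta(\varepsilon)>0$, once $n/d\ge C$. Markov's inequality then gives condition~\eqref{eq:assumption_violation_energy} with probability $1-\varepsilon/4$. On the intersection of this event with the event of Step 1, Theorem~\ref{thm:main_results} yields a feasible $x'$ with $h^\top x'=\lambda'\ge\lambda$. Combining with the reduction finishes the proof after adjusting $\varepsilon$.

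\textbf{Main obstacle.} The delicate step is Step 1. The RIP hypothesis is on $A/\sqrt d$, not on its projection, and the $\log(d/m')$ loss must come out exactly as in the statement. I would first try the top-$2m'$ order-statistic bound above. If the constants are too lossy, a fallback is to bound $\sup_{|I|\le 2m'}\|A_I^\top h\|_2$ via Gaussian concentration on the sphere, using that this supremum is $\sqrt{1+\delta}$-Lipschitz in $h$ (again by RIP), together with a chaining or union bound over supports for its mean.
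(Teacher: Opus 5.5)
Your architecture is the same as the paper's. You average over a uniform direction, cut the RIP order down to about $\min\{m,cd/\log(n/d)\}$, and show projected RIP for most directions. You then bound the expected tightened violation energy with the sub-Gaussian cap tail $\exp(-u^2/(2(1+\delta)))$ plus Markov, and invoke Theorem~\ref{thm:main_results}.

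The one genuinely different ingredient is Step~1. The paper proves Lemma~\ref{lem:RIP_random_project} by a union bound over the $\binom{n}{2m}$ column spans $E$, using $\mathbb E\exp(\frac d4\|P_E\theta\|_2^2)\le 2^m$. This gives $|\theta^\top Au|^2\le\gamma\|Au\|_2^2$ for all sparse $u$, with failure probability $e^{-c'd}$ and a multiplicative loss. You instead bound $\max_{|I|\le 2m'}\|A_I^\top h\|_2^2$ by the top-$2m'$ order statistics of $(a_i^\top h)^2$. That is an additive loss and needs only one-dimensional tails.

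This works, but not by ``a union bound''. A union bound over $i$ only gives $\max_i(a_i^\top h)^2\lesssim\log n$, hence a sum of order $m'\log n$. That exceeds $\kappa d$ when, say, $n=Cd$, $m'\asymp d$ and $d\to\infty$. What works despite the dependence across $i$ is Markov on the exceedance count. With $X_{(j)}$ the $j$-th largest $|a_i^\top h|$, you have $\mathbb P\{X_{(j)}^2>s\}\le\frac{2n}{j}e^{-s/(2(1+\delta))}$, so the expected top-$k$ sum is $\lesssim k\log(en/k)$. Markov at level $\varepsilon/4$ then costs a factor $1/\varepsilon$, which you absorb into $c(\varepsilon,\delta)$. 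Your route is more elementary; the paper's gives exponentially small failure probability and a reusable lemma. Either suffices, since only probability $1-\varepsilon/4$ is needed.

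Two things still need repair.

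First, the regime $\log(n/d)>cd$ is allowed by the statement but not covered. There $cd/\log(n/d)<1$, so your $m'$ is not a positive integer, $\log(n/m')$ is undefined, and neither Step~1 nor Theorem~\ref{thm:main_results} can be applied as written. The paper treats this case separately (its Step~1). A union bound with the cap tail shows that $x_0$ is already feasible, with failure probability at most $d(n/d)^{-\varepsilon}$, which is tiny in that regime.

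Second, your remark that $\log(n/m')\le\log(n/d)+[\log(Cd/(m\log(n/d)))]_+$ is false. If $m\ge cd/\log(n/d)$ and $m\log(n/d)\ge Cd$, the right side is $\log(n/d)$ while the left side is $\log(n/d)+\log(\log(n/d)/c)$. In general you only get $\log(n/m')\le t+\log\log(n/d)+\log(1/c)$, where $t$ is the stated denominator. Since $t\ge\log(n/d)$ and $n/d\ge C(\varepsilon,\delta)$, this is at most $(1+\varepsilon')t$, so it can be absorbed into the factor $(1-\varepsilon)$. The exact form, however, does not come out of your argument. The paper obtains it by setting the level through $t$ directly and keeping the extra factor $1/t$ from the tail integral, $\mathbb E S\le ne^{-(1+\varepsilon/2)t}/t$. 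That factor cancels the $\log(n/d)$ in $m_0\approx cd/\log(n/d)$.
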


\begin{remark}
For a matrix $A$ with i.i.d. standard Gaussian entries, as $d\to\infty$ and $n/d\to\infty$, \cite[Corollary~1.5]{bakhshi2024optimal} yields
\[
    w(K_A)=(1+o(1))\sqrt{\frac{d}{2\log(n/d)}}
    \qquad\text{almost surely}.
\]
Their spherical mean width equals $2w(K_A)/\mathbb E\|g\|_2$, where $g\sim N(0,I_d)$ and $\mathbb E\|g\|_2\sim\sqrt d$.
If the assumptions of Corollary~\ref{cor:spherical_mean_RIP} hold for every fixed $\varepsilon\in(0,1)$, $\delta=o(1)$, and the additional logarithmic term is $o(\log(n/d))$, then letting $\varepsilon\downarrow0$ recovers the same leading constant in the lower bound.
\end{remark}

We first show that projection onto a random hyperplane preserves the RIP.

\begin{lemma}[RIP under random projection]\label{lem:RIP_random_project}
Let $m\in\mathbb N$ with $2m\le n$, and suppose that $A\in\R^{d\times n}$ satisfies the RIP of order $2m$ with constant $\delta<1$.
Let $\theta$ be uniform on $\Sph^{d-1}$.
For every $0<\gamma<1-\delta$, the matrix $P_{\theta^\perp}A$ satisfies the RIP of order $2m$ with constant at most $\delta+\gamma$, with probability at least
\[
    1-\exp\left(-\frac{\gamma d}{4}
    +2m\log\frac{en}{m}\right).
\]
\end{lemma}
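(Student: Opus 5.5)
The plan is to compare $\|P_{\theta^\perp}Au\|_2^2$ with $\|Au\|_2^2$ directly, reduce the lower RIP bound to a uniform bound on $\|A_S^\top\theta\|_2$ over supports $S$ of size $2m$, and then take a union bound over supports.

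\textbf{Reduction.} For every $u\in\R^n$,
\[
\|P_{\theta^\perp}Au\|_2^2=\|Au\|_2^2-(\theta^\top Au)^2 .
\]
The upper RIP bound is therefore inherited at once: $\|P_{\theta^\perp}Au\|_2^2\le\|Au\|_2^2\le(1+\delta)\|u\|_2^2\le(1+\delta+\gamma)\|u\|_2^2$.

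For the lower bound, let $u$ be supported on a set $S$ with $|S|=2m$. Then
\[
(\theta^\top Au)^2=(\theta^\top A_Su_S)^2\le\|A_S^\top\theta\|_2^2\,\|u\|_2^2 .
\]
So it suffices to show that, with the stated probability,
\[
\theta^\top A_SA_S^\top\theta\le\gamma\qquad\text{simultaneously for all } S\subset[n],\ |S|=2m .
\]
On that event $\|P_{\theta^\perp}Au\|_2^2\ge(1-\delta-\gamma)\|u\|_2^2$. The hypothesis $\gamma<1-\delta$ guarantees that the new constant $\delta+\gamma$ is still less than $1$.

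\textbf{Fixed support.} Fix $S$ and set $M=A_SA_S^\top$. This is positive semidefinite with rank at most $2m$, and by RIP its nonzero eigenvalues $\mu_i$ satisfy $\mu_i\le1+\delta<2$.

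Write $\theta=g/\|g\|_2$ with $g\sim N(0,I_d)$. The event $\theta^\top M\theta>\gamma$ is exactly the event $g^\top(M-\gamma I)g>0$. Apply the exponential Markov inequality with a fixed $\lambda>0$, say $\lambda=1/8$, so that $2\lambda(\mu_i-\gamma)<1/2$. Rotation invariance of $g$ gives
\[
\mathbb P\{\theta^\top M\theta>\gamma\}\le\prod_{i\le 2m}\bigl(1-2\lambda(\mu_i-\gamma)\bigr)^{-1/2}\prod_{i>2m}(1+2\lambda\gamma)^{-1/2}.
\]
The first product contributes at most $e^{Cm}$ for an absolute constant $C$. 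The second contributes $\exp(-\tfrac{d-2m}{2}\log(1+2\lambda\gamma))\approx\exp(-c\gamma d)$. Together this gives a bound of the form $\exp(-c\gamma d+Cm)$.

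\textbf{Union bound.} There are at most $\binom{n}{2m}\le(en/2m)^{2m}\le\exp(2m\log(en/m))$ supports. Summing the fixed-support bound over them yields the claim, up to the exact constants.

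The main obstacle I expect is matching the precise exponent $-\gamma d/4+2m\log(en/m)$. This requires tuning $\lambda$ and using $\log(1+x)\ge x-x^2/2$, and it may need the $O(m)$ loss from the first product absorbed into the $2m\log(en/m)$ term. That absorption is legitimate because $\log(en/m)\ge1+\log 2$ when $2m\le n$, which leaves slack of order $m$.

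If the Gaussian computation gets too tight, I will instead use the Beta distribution of $\|P_V\theta\|_2^2$ for the $2m$-dimensional subspace $V=\operatorname{range}(A_S)$, since $\theta^\top M\theta\le(1+\delta)\|P_V\theta\|_2^2$, and then apply the standard tail bound for that Beta variable.
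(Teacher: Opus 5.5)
Your architecture matches the paper's: Pythagoras, reducing the lower bound to smallness of $\theta$ on the spans of $2m$ columns, then a union bound over supports. But there is a genuine gap. The argument as written does not reach the stated exponent, and the repairs you suggest cannot close it.

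\textbf{Where the exponent is lost.} With $\lambda=1/8$ the second product equals $\exp\bigl(-\tfrac{d-2m}{2}\log(1+\gamma/4)\bigr)\ge e^{-\gamma d/8}$. So you lose $\gamma d/8$ in the exponent. This is not $O(m)$: the lemma is informative only when $\gamma d>8m\log(en/m)$, and $\gamma d/m$ may be arbitrarily large. Hence the loss cannot be absorbed into the slack of $\binom n{2m}\le(en/m)^{2m}4^{-m}$.

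\textbf{Why tuning $\lambda$ fails.} Since $\log(1+x)<x$, reaching $\gamma d/4$ forces $\lambda\ge1/4$. For $\mu_i=1+\delta$ this gives $1-2\lambda(\mu_i-\gamma)\le\tfrac12(1-\delta+\gamma)$, which is close to $\gamma$ when $\delta$ is close to $1-\gamma$. The first product is then of order $\gamma^{-m}$, not $e^{Cm}$.

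\textbf{Root cause.} The problem is your additive reduction $(\theta^\top Au)^2\le\|A_S^\top\theta\|_2^2\|u\|_2^2$. In the worst case $A_SA_S^\top=(1+\delta)P_V$, it demands $\|P_V\theta\|_2^2\le\gamma/(1+\delta)$, a threshold nearly halved when $\delta$ is near $1$. Your Beta fallback inherits this loss. For instance, using $\mathbb E\exp(\lambda d\|P_V\theta\|_2^2)\le(1-2\lambda)^{-m}$, reaching exponent $\gamma d/4$ at that threshold forces $\lambda\ge(1+\delta)/4$. That costs a per-support factor $(2/(1-\delta))^m$, which fits the budget $4^m$ only when $\delta\le1/2$. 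That is the regime used later in the paper, but the lemma is stated for all $\delta<1$.

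\textbf{The missing idea: a multiplicative reduction.} For $u$ supported on $S$, $Au$ lies in $E_S=\operatorname{range}(A_S)$, so
$|\theta^\top Au|=|\langle P_{E_S}\theta,Au\rangle|\le\|P_{E_S}\theta\|_2\|Au\|_2$.
On the event that $\|P_{E_S}\theta\|_2^2\le\gamma$ for all $S$, this gives
$\|P_{\theta^\perp}Au\|_2^2\ge(1-\gamma)\|Au\|_2^2\ge(1-\gamma)(1-\delta)\|u\|_2^2\ge(1-\delta-\gamma)\|u\|_2^2$.
This event no longer involves $\delta$.

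\textbf{Bounding its probability.} The paper compares $\theta$ with $g$ through the independent length rather than working with the ratio $g/\|g\|_2$. Condition on $\theta$ and apply Jensen to $\|g\|_2^2$, which has mean $d$. This gives
$\mathbb E\exp(\tfrac d4\|P_E\theta\|_2^2)\le\mathbb E\exp(\tfrac14\|P_Eg\|_2^2)\le2^m$.
Markov's inequality yields $\mathbb P\{\|P_E\theta\|_2^2>\gamma\}\le2^me^{-\gamma d/4}$ per span. Finally, $\binom n{2m}2^m\le(en/m)^{2m}$ produces exactly the stated bound.
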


\begin{proof}
Write $\theta=g/\|g\|_2$, where $g\sim N(0,I_d)$.
For any fixed subspace $E$ of dimension at most $2m$, independence of the Gaussian length and direction, together with $\mathbb E\|g\|_2^2=d$, gives
\[
    \mathbb E\exp\left(\frac d4\|P_E\theta\|_2^2\right)
    \le\mathbb E\exp\left(\frac14\|P_Eg\|_2^2\right)
    \le2^m.
\]
For the first inequality, condition on $\theta$ and apply Jensen's inequality to $\|g\|_2^2$. The second uses the exponential moment of a sum of at most $2m$ independent squared standard Gaussian variables.
Markov's inequality and a union bound over the spans of all sets of $2m$ columns show that $\|P_E\theta\|_2^2\le\gamma$ for every such span, except on an event of probability at most
\[
    \binom n{2m}2^m e^{-\gamma d/4}
    \le\exp\left(-\frac{\gamma d}{4}
    +2m\log\frac{en}{m}\right).
\]
On the complementary event, $|\theta^\top Au|\le\sqrt\gamma\,\|Au\|_2$ for every $2m$-sparse vector $u$, so
\[
\begin{aligned}
    \|P_{\theta^\perp}Au\|_2^2
    &=\|Au\|_2^2-|\theta^\top Au|^2\\
    &\ge(1-\gamma)(1-\delta)\|u\|_2^2
    \ge(1-\delta-\gamma)\|u\|_2^2.
\end{aligned}
\]
The upper RIP bound follows because orthogonal projection is a contraction.
\end{proof}

\begin{proof}[Proof of Corollary~\ref{cor:spherical_mean_RIP}]
Let $\theta$ be uniform on $\Sph^{d-1}$, set $c=(1-2\delta)/128$, and choose $C$ sufficiently large depending only on $\varepsilon$ and $\delta$.
Set
\[
    t=\log(n/d)
    +\left[\log\left(\frac{Cd}{m\log(n/d)}\right)\right]_+,
    \qquad
    x_0=\frac{1-\varepsilon/2}{\sqrt{2(1+\delta)t}}\theta.
\]
We will find a feasible point with the same objective value as $x_0$ for a proportion at least $1-\varepsilon/4$ of directions.

\smallskip
\noindent\textbf{Step 1: A direct feasible point.}
The RIP gives $\|a_i\|_2^2\le(1+\delta)d$.
For every integer $k\ge1$, the spherical moment formula gives
\[
    \mathbb E(a_i^\top\theta)^{2k}
    =\frac{(2k-1)!!\,\|a_i\|_2^{2k}}{d(d+2)\cdots(d+2k-2)}
    \le(2k-1)!!(1+\delta)^k.
\]
The odd moments vanish, so expansion of the moment-generating function and Chernoff's inequality yield
\[
    \mathbb P\{a_i^\top\theta\ge u\}
    \le\exp\left(-\frac{u^2}{2(1+\delta)}\right),
    \qquad u>0.
\]
If $\log(n/d)>cd/2$, a union bound gives
\[
\begin{aligned}
    \mathbb P\{x_0\notin K_A\}
    &\le n\exp\left(-\frac{t}{(1-\varepsilon/2)^2}\right)\le d(n/d)^{-\varepsilon}
    \le d e^{-\varepsilon cd/2}
    \le\frac{\varepsilon}{4}.
\end{aligned}
\]
Here we used $t\ge\log(n/d)$, $(1-\varepsilon/2)^{-2}\ge1+\varepsilon$, and $d\ge C$.
Thus $x_0$ itself is feasible with the required probability in this case.

\smallskip
\noindent\textbf{Step 2: Correction at a smaller RIP order.}
Suppose now that $\log(n/d)\le cd/2$, and set
\[
    m_0=\min\left\{m,\left\lfloor\frac{cd}{\log(n/d)}\right\rfloor\right\}.
\]
The original RIP still holds at order $2m_0$, and the present case gives
\[
    m_0\ge\frac12\min\left\{m,\frac{cd}{\log(n/d)}\right\}.
\]
Since $n>d$, the original RIP forces $2m\le d$.
For sufficiently large $C$, monotonicity of $u\log(en/u)$ on $(0,n]$ gives
\[
    m_0\log\frac{en}{m_0}
    \le\frac{cd}{\log(n/d)}
    \log\left(\frac{en\log(n/d)}{cd}\right)
    \le2cd.
\]
Apply Lemma~\ref{lem:RIP_random_project} to $A/\sqrt d$ at order $2m_0$ with $\gamma=(1-2\delta)/4$.
The projected matrix satisfies the RIP with constant at most $(1+2\delta)/4$, except on an event of probability at most
\[
    \exp\left[-\frac{(1-2\delta)d}{16}+4cd\right]
    =\exp\left[-\frac{(1-2\delta)d}{32}\right]
    \le\frac{\varepsilon}{8}.
\]
We apply Theorem~\ref{thm:main_results} with matrix $A/\sqrt d$, threshold $\one/\sqrt d$, initial point $x_0$, and tolerance $\varepsilon/(8\sqrt d)$.
After multiplying its violation-energy condition by $d$, the energy to control is
\[
    S=\sum_{i=1}^n\bigl(a_i^\top x_0-1+\varepsilon/4\bigr)_+^2.
\]
Since \(\left(\frac{1-\varepsilon/4}{1-\varepsilon/2}\right)^2\ge1+\frac{\varepsilon}{2},\)
the tail bound from Step~1 and the tail-integration formula give
\[
\begin{aligned}
    \mathbb ES
    &\le2n(1-\varepsilon/4)^2
    \int_1^\infty(u-1)e^{-(1+\varepsilon/2)tu^2}\,du\\
    &\le\frac{n e^{-(1+\varepsilon/2)t}}{t}
    \le(n/d)^{-\varepsilon/2}
    \min\left\{\frac mC,\frac{d}{\log(n/d)}\right\}.
\end{aligned}
\]
Consequently,
\[
    \frac{\mathbb ES}{m_0}
    \le\frac2c(n/d)^{-\varepsilon/2}.
\]
On the projected RIP event, the condition in Theorem~\ref{thm:main_results} is implied by
\[
    S\le\frac{\varepsilon^2m_0}{64}
    \left(\frac{1-2\delta}{3-2\delta}\right)^2.
\]
Markov's inequality makes the failure probability at most $\varepsilon/8$ after increasing $C$.
A union bound and Theorem~\ref{thm:main_results} therefore give a feasible point $x'$ with $\theta^\top x'=\theta^\top x_0$ with probability at least $1-\varepsilon/4$.

\smallskip
\noindent\textbf{Step 3: Averaging over objective directions.}
In both cases, the support function of $K_A$ is at least $\theta^\top x_0$ with probability at least $1-\varepsilon/4$.
It is nonnegative in every direction because $0\in K_A$.
Independence of the length and direction of $g\sim N(0,I_d)$ and Tonelli's theorem give
\[
\begin{aligned}
    w(K_A)
    &=\mathbb E\|g\|_2\,
    \mathbb E_\theta\sup_{x\in K_A}\theta^\top x\ge\frac{(1-\varepsilon/4)(1-\varepsilon/2)\mathbb E\|g\|_2}
    {\sqrt{2(1+\delta)t}}\ge(1-\varepsilon/4)^2(1-\varepsilon/2)
    \sqrt{\frac{d}{2(1+\delta)t}}.
\end{aligned}
\]
The last inequality holds for $d\ge C$, since $\mathbb E\|g\|_2\ge(1-\varepsilon/4)\sqrt d$ for sufficiently large $d$.
Now use $(1-\varepsilon/4)^2(1-\varepsilon/2)\ge1-\varepsilon$ and substitute the definition of $t$.
\end{proof}

\section{Discussion and further questions}\label{sec:discussion}

We discuss extensions and further questions related to our work.

\paragraph{Necessity of the delocalization condition.}
In the proof of Theorem~\ref{thm:upper_bound}, the delocalization condition is used only in the one-sided tail bound in Proposition~\ref{prop:tail_lower_bound_refined}. A Rademacher example shows that the logarithmic exponent in this condition is optimal. Indeed, let $A$ have independent symmetric $\{-1,1\}$-valued entries. Since $a_i^\top x\le\|x\|_1$, the feasible region $K_A$ contains the unit $\ell_1$ ball $B_1^d$. Hence, deterministically,
\[
    z^*(\one,h)\ge\sup_{x\in B_1^d}h^\top x=\|h\|_\infty,
\]
which implies that the asymptotic formula $z^*(\one,h)=(1+o(1))/\sqrt{2\log(n/d)}$ fails whenever $\sqrt{2\log(n/d)}\|h\|_\infty$ is bounded from below by a constant strictly greater than one. This example establishes the sharpness of the logarithmic dependence in the delocalization condition.

\paragraph{Sharp constants under finite moments.}
Theorem~\ref{thm:random_lp_lower} leaves a gap in terms of multiplicative constants between the finite-moment lower bound and the sharp upper bound. The restriction $\eta>4/(p-4)$ arises from comparing the violation energy with the available projected RIP order; it does not imply that the loss in the leading constant is necessary. Let the entries of $A$ be independent copies of a fixed centered unit-variance random variable with a finite $p$th moment for some $p>4$. Under suitable growth and delocalization assumptions, does $\sqrt{2\log(n/d)}\,z^*(\one,h)$ converge to $1$ in probability? One can also ask how far the delocalization assumption can be weakened.

\paragraph{Correction under weaker matrix assumptions.}
Write $B=P_{h^\perp}A$. The proof controls the active-set sizes, the inverse Gram matrices, and the matrices that transfer the active-set residual vectors:
\[
    B_{I_{k+1}}^\top B_{I_k}(B_{I_k}^\top B_{I_k})^{-1}.
\]
The proof would extend if analogous bounds held throughout the iterations of the correction algorithm. Can such bounds hold for useful classes of matrices without uniform projected RIP? For heavy-tailed matrices, can they be established for the active sets selected by the algorithm? The argument must also prevent the active sets from becoming too large; contraction estimates that assume small active sets are not enough.

There are deterministic constructions with a different goal. Under $\max_i\|a_i\|_2\le\sqrt d$ and $n/d\to\infty$, the spherical discrepancy algorithm of Jones and McPartlon~\cite{jones2020spherical} constructs, in polynomial time, a feasible point of norm at least $(1-o(1))/\sqrt{2\log(n/d)}$. It does not preserve a prescribed objective value. This leaves room for assumptions that control both feasibility and the direction of the correction vector.

\paragraph{Non-centered random constraints.}
Let $A=A_0+\sigma G$, where $A_0=[\mu_1,\ldots,\mu_n]$ is deterministic, $\sigma>0$, and $G$ has independent standard Gaussian entries. Drift in the objective direction can change the optimal value. Let $z_G$ denote the optimal value for $G$. If $A_0=\beta h\one^\top$ and $K_G$ is bounded, a calculation gives
\[
    z^*(\one,h)=
    \begin{cases}
        z_G/(\sigma+\beta z_G),&\sigma+\beta z_G>0,\\
        +\infty,&\sigma+\beta z_G\le0.
    \end{cases}
\]
As $d\to\infty$ and $n/d\to\infty$, we have $z_G\sim[2\log(n/d)]^{-1/2}$ in probability, so the centered asymptotic formula remains valid precisely when $\beta=o(\sigma\sqrt{\log(n/d)})$. For a general deterministic mean matrix \(A_0\), which assumptions ensure
\[
    z^*(\one,h)=\frac{1+o(1)}{\sigma\sqrt{2\log(n/d)}}
\]
in probability, uniformly over the admissible $A_0$?

\paragraph{The proportional regime.}
Let $n/d\to\alpha>2$, and suppose that $A$ has independent standard Gaussian entries. Wendel's theorem implies that $K_A$ is bounded with probability $1-o(1)$~\cite{wendel1962problem}. Gaussian comparison already yields asymptotics for the optimal value of ball-constrained linear optimization~\cite[Appendix~B]{montanari2024tractability}. For non-Gaussian entries, can one determine the asymptotic optimal value, identify deterministic assumptions for sharp bounds, and develop an efficient correction algorithm?

\paragraph{Gaussian width and one-sided assumptions.}
Corollary~\ref{cor:spherical_mean_RIP} gives a Gaussian-width lower bound under deterministic RIP assumptions. Deterministic upper bounds, however, require one-sided information. Changing column signs preserves the RIP, but the signs can be chosen so that $\langle a_i,v\rangle\ge0$ for every $i$ and a fixed $v\ne0$. Then $-tv\in K_A$ for all $t\ge0$, so $w(K_A)$ is infinite. What additional assumptions suffice for a Gaussian-width upper bound?

\appendix

\section{Technical tools}\label{sec:appendix_tools}

We collect the tools used in Section~\ref{sec:application}. Throughout, $\Phi$ denotes the standard Gaussian distribution function.

\begin{theorem}[Self-normalized large deviations, {\cite[Theorem~2.1]{MR2016616}}]\label{lem:self_normalized}
Let $Y_1,\ldots,Y_d$ be independent centered random variables with finite variances such that $\sum_{i=1}^d\mathbb EY_i^2>0$. Set
\[
    S_d=\sum_{i=1}^dY_i,
    \qquad B_d=\left(\sum_{i=1}^d\mathbb EY_i^2\right)^{1/2},
    \qquad V_d=\left(\sum_{i=1}^dY_i^2\right)^{1/2}.
\]
For $x\ge0$, define
\begin{equation}
\begin{aligned}
    \Delta_{d,x}
    &=\frac{(1+x)^2}{B_d^2}
    \sum_{i=1}^d\mathbb E\left[Y_i^2\one_{\{|Y_i|>B_d/(1+x)\}}\right]+\frac{(1+x)^3}{B_d^3}
    \sum_{i=1}^d\mathbb E\left[|Y_i|^3\one_{\{|Y_i|\le B_d/(1+x)\}}\right].
\end{aligned}
\end{equation}
There are absolute constants $A>1$ and $0<C\le A$ such that
\begin{equation}
    \frac{\mathbb P\{S_d\ge xV_d\}}{1-\Phi(x)}\le e^{C\Delta_{d,x}}
    \quad\text{and}\quad
    \frac{\mathbb P\{S_d\ge xV_d\}}{1-\Phi(x)}\ge e^{-C\Delta_{d,x}}
\end{equation}
for every $x\ge0$ satisfying $x^2\max_i\mathbb EY_i^2\le B_d^2$ and $\Delta_{d,x}\le(1+x)^2/A$.
\end{theorem}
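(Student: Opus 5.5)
The plan is to follow the conjugate-measure (exponential tilting) method for the pair $(S_d,V_d^2)$, normalized so that $B_d=1$ (replace $Y_i$ by $Y_i/B_d$; both the event $\{S_d\ge xV_d\}$ and $\Delta_{d,x}$ are scale invariant). Throughout I would write $\tau=1/(1+x)$ for the truncation level. I would also write $\bar Y_i=Y_i\one_{\{|Y_i|\le\tau\}}$. The two moments that appear in $\Delta_{d,x}$ are then
\[
    L_2=\sum_i\mathbb E[Y_i^2\one_{\{|Y_i|>\tau\}}],
    \qquad
    L_3=\sum_i\mathbb E|\bar Y_i|^3,
\]
so that $\Delta_{d,x}=(1+x)^2L_2+(1+x)^3L_3$. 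The basic algebraic device is the elementary inequality $2xV_d\le x^2V_d^2/b+b$ for any $b>0$. It lets one trade the nonlinear event $\{S_d\ge xV_d\}$ for events that are linear in $S_d$ and $V_d^2$.

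\textbf{Step 1 (upper bound).} The key identity is that, for $\lambda>0$, $e^{\lambda Y-\lambda^2Y^2/2}$ has expectation $1+O(\lambda^3\mathbb E|Y|^3)$ when $|\lambda Y|\lesssim1$ and $\mathbb EY=0$. This holds because the second-order terms cancel exactly, regardless of the variance. On $\{S_d\ge xV_d\}$ one has $xS_d-x^2V_d^2/2\ge x^2V_d^2/2$, and this quantity is at least $x^2/2$ unless $V_d$ is small. So I would bound
\[
    \mathbb P\{S_d\ge xV_d\}
    \le e^{-x^2/2}\,\mathbb E\prod_i e^{xY_i-x^2Y_i^2/2}
\]
on the region $V_d\gtrsim1$. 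The complementary region $V_d^2\le1-\eta$ is handled by a lower-tail (Bernstein-type) bound for $\sum\bar Y_i^2$, since $\sum_i\mathbb E\bar Y_i^2\ge1-L_2$. In the product, the summands with $|Y_i|>\tau$ contribute at most the factor $e^{x|Y_i|-x^2Y_i^2/2}\le e^{1/2}$ each. Their number and effect are controlled by $(1+x)^2L_2$ through Markov's inequality. The truncated factors give $\prod_i(1+Cx^3\mathbb E|\bar Y_i|^3+Cx^2\mathbb E[Y_i^2\one_{\{|Y_i|>\tau\}}])\le e^{C\Delta_{d,x}}$. Finally, recovering the precise Gaussian factor $1-\Phi(x)\asymp e^{-x^2/2}/(1+x)$, rather than just $e^{-x^2/2}$, requires tilting: under the measure with density proportional to $\prod_i e^{x\bar Y_i-x^2\bar Y_i^2/2}$, the statistic $xS_d-x^2V_d^2/2-x^2/2$ has mean $O(\Delta_{d,x})$ and variance $\asymp x^2$. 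A Berry--Esseen estimate under the tilted measure, with error $L_3$-controlled, then yields the extra $1/(1+x)$.

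\textbf{Step 2 (lower bound).} Use $\{S_d\ge\tfrac x2(V_d^2/b+b)\}\subset\{S_d\ge xV_d\}$ with $b=1$. This reduces the task to bounding $\mathbb P\{xS_d-x^2V_d^2/2\ge x^2/2\}$ from below. Tilt by the same density, now restricted to the truncated variables. Under the tilted measure the linear statistic is centered near $x^2/2$, up to $O(\Delta_{d,x})$, with variance of order $x^2$ and third moments controlled by $(1+x)^3L_3$. A Berry--Esseen/local estimate then gives probability $\gtrsim1/(1+x)$ of landing in a window of width $O(1)$ above $x^2/2$. Undoing the tilt costs $e^{-x^2/2-C\Delta_{d,x}}$. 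The untruncated summands are handled by restricting to the event that they are absent, which has probability $\ge1-(1+x)^2L_2\ge e^{-C\Delta}$ by the smallness hypothesis $\Delta_{d,x}\le(1+x)^2/A$.

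\textbf{Role of the hypotheses.} The condition $x^2\max_i\mathbb EY_i^2\le B_d^2$ ensures that each tilt parameter satisfies $x\,\mathrm{sd}(Y_i)\le1$, so the individual tilts are nondegenerate. The condition $\Delta_{d,x}\le(1+x)^2/A$ keeps the tilted variance comparable to $x^2$.

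\textbf{Main obstacle.} The hard part is the sharp factor $1/(1+x)$ uniformly in $x$. This requires a tilted central limit estimate for a two-dimensional functional ($S_d$ and $V_d^2$ jointly), with errors expressed only through $\Delta_{d,x}$ and no higher moments. My first attempt would be to work with the one-dimensional combination $xS_d-x^2V_d^2/2$ directly, and to apply a nonuniform Berry--Esseen bound to it under the conjugate measure. The error terms would then be matched against $L_2,L_3$ via the truncation at $\tau$. Since this is exactly the content of the cited \cite[Theorem~2.1]{MR2016616}, in the paper one would simply invoke it.
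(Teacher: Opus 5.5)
The paper gives no proof of this statement. It is quoted from \cite{MR2016616}, and only the upper bound is used, in the proof of Proposition~\ref{prop:expected_energy}, where $\Delta_{d,x}\lesssim s^3\|h\|_\infty$ is a small fraction of $x^2$ but need not stay bounded. Your closing remark, to invoke the reference, is therefore exactly the paper's route. The outline you put in front of it would not survive as a proof, though.

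\textbf{Step~1.} Normalize $B_d=1$. When $V_d>1$, $S_d\ge xV_d$ does not imply $xS_d-\tfrac{x^2}{2}V_d^2\ge\tfrac{x^2}{2}V_d^2$. When $V_d<1$, the right side is below $x^2/2$. The correct bookkeeping is
\[
    \one_{\{S_d\ge xV_d\}}
    \le e^{-x^2/2}\exp\Bigl(xS_d-\tfrac{x^2}{2}V_d^2\Bigr)\exp\Bigl(\tfrac{x^2}{2}(1-V_d)^2\Bigr).
\]
So the single tilt loses a factor $e^{x^2(1-V_d)^2/2}$. This factor is unbounded once $|V_d-1|\gg1/x$ on \emph{either} side of $1$, not only when $V_d$ is small; at $V_d=2$ there is no decay at all.

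Your lower-tail Bernstein bound disposes of $\{V_d^2\le1-\eta\}$ for fixed $\eta$. Nothing in the sketch handles the side $V_d>1$ or the window $1/x\ll|V_d-1|\le\eta$. A tail bound for $V_d^2$ alone cannot do it: for small fixed $\eta$, $\mathbb P\{V_d^2\ge1+\eta\}$ may be much larger than $1-\Phi(x)$ under the hypotheses. There you need a genuinely joint estimate for $(S_d,V_d^2)$. One option is to slice in $V_d$ and use $V_d$-dependent two-parameter tilts $e^{\lambda\bar S_d-\mu\bar V_d^2}$. The tilt $e^{xS_d-x^2V_d^2/2}$ alone is adequate only on $|V_d-1|\lesssim1/x$. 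This is the part the paper actually relies on.

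\textbf{Step~2.} Restricting to the event that no $|Y_i|$ exceeds $\tau$ fails in the stated range. The hypothesis $\Delta_{d,x}\le(1+x)^2/A$ allows $(1+x)^2L_2>1$, so the claimed bound $1-(1+x)^2L_2\ge e^{-C\Delta_{d,x}}$ is false. The event can even be null. Take $Y_1=\pm B_d/x$: it satisfies $x^2\mathbb EY_1^2\le B_d^2$, yet $|Y_1|>B_d/(1+x)$ almost surely, while $\Delta_{d,x}$ can stay bounded.

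The large summands have to be tilted, not excluded. The bound $xY-\tfrac{x^2}{2}Y^2\le\tfrac12$ needs no truncation. Jensen together with $x^2\mathbb EY_i^2\le B_d^2$ gives $\mathbb Ee^{xY_i-x^2Y_i^2/2}\ge e^{-1/2}$, which is where that hypothesis naturally enters.
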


\begin{lemma}[Concentration inequalities]\label{lem:concentration}
Let $X_1,\ldots,X_n$ be independent centered random variables with finite variances.
\begin{enumerate}[label=(\roman*)]
\item\label{item:Fuk_Nagaev} \textbf{Fuk--Nagaev inequality.} We use the explicit form in \cite[Eq.~(1.7)]{Rio17Fuk}, which follows from the inequalities of Fuk and Nagaev \cite{MR326835,MR542129}.

Let $\sigma^2=\sum_{i=1}^n\mathbb EX_i^2$. For $p>2$, set $C_p=(\sum_{i=1}^n\mathbb EX_{i+}^p)^{1/p}$. Then, for every $x>0$,
\[
    \mathbb P\left\{\sum_{i=1}^nX_i\ge x\right\}
    \le\left(\frac{(p+2)C_p}{px}\right)^p
    +\exp\left(-\frac{2x^2}{(p+2)^2e^p\sigma^2}\right).
\]
\item\label{item:Bernstein} \textbf{Generalized Bernstein inequality} \cite[Theorem~3.1]{kuchibhotla2022moving}.
Suppose that $\|X_i\|_{\psi_\alpha}\le K$ for some $\alpha\in(0,2]$. Set $\alpha^*=\infty$ when $\alpha\le1$ and $\alpha^*=\alpha/(\alpha-1)$ when $\alpha>1$. Then, for every $a\in\R^n$ and $t\ge0$,
\[
    \mathbb P\left\{\left|\sum_{i=1}^n a_iX_i\right|\ge t\right\}
    \le2\exp\left[-\frac1{C_\alpha}\min\left\{
    \frac{t^2}{K^2\|a\|_2^2},
    \left(\frac{t}{K\|a\|_{\alpha^*}}\right)^\alpha\right\}\right].
\]
\item\label{item:Hanson_Wright} \textbf{Generalized Hanson--Wright inequality} \cite[Theorem~2.1]{MR4633263}.
Suppose that $\|X_i\|_{\psi_\alpha}\le K$ for some $\alpha\in(0,2]$. Let $A\in\R^{n\times n}$ be symmetric. Then, for every $t\ge0$,
\[
    \mathbb P\left\{\left|X^\top AX-\mathbb E[X^\top AX]\right|\ge t\right\}
    \le2\exp\left[-\frac1{C_\alpha}\min\left\{
    \frac{t^2}{K^4\|A\|_{\HS}^2},
    \left(\frac{t}{K^2\|A\|_{\opnorm}}\right)^{\alpha/2}\right\}\right].
\]
\end{enumerate}
\end{lemma}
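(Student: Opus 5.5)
Each of the three items in Lemma~\ref{lem:concentration} is a known inequality, so the plan is not to re-derive them. Instead we will check that the cited results, in their original normalizations, imply the exact forms stated here. No probabilistic argument beyond rescaling and bookkeeping should be needed.

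For \ref{item:Fuk_Nagaev}, we will read off \cite[Eq.~(1.7)]{Rio17Fuk}. That inequality is stated for sums of independent centered real variables with finite variance, using only the positive parts $X_{i+}$ in the moment term. We will verify three things:
\begin{itemize}
\item the constant $(p+2)/p$ in the polynomial term and the factor $2/((p+2)^2e^p)$ in the Gaussian term coincide with Rio's choice of the truncation level;
\item if Rio's constants differ, they can be weakened monotonically to the displayed ones;
\item the bound is allowed for every $x>0$.
\end{itemize}
If Rio writes the moment term through $\sum_i\mathbb E X_{i+}^p$ rather than its $p$th root, the identity $C_p^p=\sum_i\mathbb E X_{i+}^p$ closes the gap.

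For \ref{item:Bernstein}, we will start from \cite[Theorem~3.1]{kuchibhotla2022moving}, which treats independent centered variables with bounded $\psi_\alpha$ (quasi-)norms. We first reduce to $K=1$ by applying the theorem to $X_i/K$ and rescaling $t$. The weighted sum is then controlled by a mixture of a sub-Gaussian term in $\|a\|_2$ and a heavy-tail term in $\|a\|_{\alpha^*}$. The only point needing care is that, in that paper, the heavy-tail term is a norm of the weight vector, namely $\max_i|a_i|$ for $\alpha\le1$ and $\|a\|_{\alpha/(\alpha-1)}$ for $1<\alpha\le2$. This matches the convention $\alpha^*=\infty$ for $\alpha\le1$. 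The remaining bookkeeping is to absorb all $\alpha$-dependent constants into $C_\alpha$ and to pass from the possibly stated one-sided or Orlicz-norm form to the two-sided tail bound with the factor $2$.

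For \ref{item:Hanson_Wright}, we will apply \cite[Theorem~2.1]{MR4633263} to $X/K$, which multiplies the quadratic form by $K^{-2}$. This produces the $K^4\|A\|_{\HS}^2$ and $K^2\|A\|_{\opnorm}$ normalizations. We restrict to symmetric $A$, which costs nothing since $X^\top AX=X^\top\tfrac{A+A^\top}{2}X$. The main potential obstacle is this item. The cited generalized Hanson--Wright bound may be stated with additional diagonal or mean-adjustment terms, or under unit-variance assumptions. Handling it would take two steps:
\begin{itemize}
\item split the form into its diagonal part $\sum_i A_{ii}(X_i^2-\mathbb EX_i^2)$ and its off-diagonal part;
\item bound the diagonal part by item~\ref{item:Bernstein} applied to the $\psi_{\alpha/2}$ variables $X_i^2-\mathbb EX_i^2$, and the off-diagonal part by the decoupled version of the theorem.
\end{itemize}
Both resulting tails are dominated by the stated minimum, since $\|\operatorname{diag}A\|_2\le\|A\|_{\HS}$ and $\max_i|A_{ii}|\le\|A\|_{\opnorm}$, after enlarging $C_\alpha$.
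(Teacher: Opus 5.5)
Your proposal takes essentially the paper's approach. The paper gives no argument for this lemma beyond citing Rio's Eq.~(1.7), Kuchibhotla--Chakrabortty's Theorem~3.1, and the cited generalized Hanson--Wright theorem; your rescaling by $K$ and your conversion of the deviation-form bound into the displayed minimum are exactly the bookkeeping implicitly used. The diagonal/off-diagonal fallback you sketch for item (iii) is unnecessary, since the cited result is already stated in this form for all $\alpha\in(0,2]$; were it needed, its off-diagonal step would require a separate chaos estimate of the same strength rather than being a routine consequence.
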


\begin{lemma}[Concentration of a weighted sum of squares]\label{lem:concentration_V}
Let $h\in\Sph^{d-1}$, and let $X_1,\ldots,X_d$ be independent centered unit-variance random variables. Set
\begin{equation}
    W:=\sum_{i=1}^d(h_iX_i)^2.
\end{equation}
\begin{enumerate}[label=(\alph*),leftmargin=2em]
\item \textbf{Finite-moment case.} Assume that, for some $p>4$, $\mathbb E|X_i|^p\le K^p$ for every $i\in[d]$. There are an absolute constant $C$ and a constant $c_p$ such that, for every $t>0$,
\begin{equation}
    \mathbb P\{W-1\ge t\}
    \le\left(\frac{CK^2\|h\|_p^2}{t}\right)^{p/2}
    +2\exp\left(-\frac{c_pt^2}{K^4\|h\|_4^4}\right).
\end{equation}
\item \textbf{Sub-Weibull case.} Assume that $\|X_i\|_{\psi_\alpha}\le K$ for some $\alpha\in(0,2]$. There is a constant $C_\alpha$ such that, for every $t>0$,
\begin{equation}
    \mathbb P\{W-1\ge t\}
    \le2\exp\left[-\frac1{C_\alpha}\min\left\{
    \frac{t^2}{K^4\|h\|_4^4},
    \left(\frac{t}{K^2\|h\|_\infty^2}\right)^{\alpha/2}\right\}\right].
\end{equation}
\item \textbf{Lower deviation.} Assume that $\mathbb E|X_i|^4\le K^4$. Then, for every $t>0$,
\begin{equation}
    \mathbb P\{W-1\le-t\}
    \le\exp\left(-\frac{t^2}{2K^4\|h\|_4^4+\frac23\|h\|_\infty^2t}\right).
\end{equation}
\end{enumerate}
\end{lemma}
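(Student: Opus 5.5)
The plan is to write $W-1=\sum_{i=1}^d h_i^2(X_i^2-1)$, a sum of independent centered variables $Y_i:=h_i^2(X_i^2-1)$ with $\sum_i h_i^2=1$, and to invoke the appropriate concentration inequality from Lemma~\ref{lem:concentration} in each case. The variance proxy is $\sum_i h_i^4\,\mathbb E(X_i^2-1)^2\le K^4\|h\|_4^4$, which produces the Gaussian-type term in every part.

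\textbf{Part (a).} Apply the Fuk--Nagaev inequality of Lemma~\ref{lem:concentration}\ref{item:Fuk_Nagaev} to the $Y_i$ with exponent $p/2>2$, which is allowed since $p>4$. Because $|X_i^2-1|\le\max\{X_i^2,1\}$, we have $\mathbb E(Y_i)_+^{p/2}\le h_i^p\,\mathbb E|X_i|^p\le h_i^pK^p$ (using $K\ge1$ from unit variance), so $C_{p/2}\le K^2\|h\|_p^2$. This yields the polynomial term $\bigl(CK^2\|h\|_p^2/t\bigr)^{p/2}$ with an absolute $C$, since $(p/2+2)/(p/2)\le 2$. The Gaussian term is $\exp(-c_pt^2/(K^4\|h\|_4^4))$ with $c_p=2/((p/2+2)^2e^{p/2})$, because $\sigma^2\le K^4\|h\|_4^4$ (as $\mathbb E X_i^4\le K^4$ by Lyapunov). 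The factor $2$ in the statement only adds slack.

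\textbf{Part (b).} Apply the generalized Hanson--Wright inequality of Lemma~\ref{lem:concentration}\ref{item:Hanson_Wright} with the diagonal matrix $D=\operatorname{diag}(h_1^2,\ldots,h_d^2)$. Then $X^\top DX=W$ and $\mathbb E W=1$, with $\|D\|_{\HS}^2=\|h\|_4^4$ and $\|D\|_{\opnorm}=\|h\|_\infty^2$. Substituting these gives exactly the stated bound.

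\textbf{Part (c).} The lower tail uses that $-Y_i\le h_i^2$ is bounded above. Apply Bernstein's inequality (one-sided, for variables bounded above) to $-Y_i$, with upper bound $b=\|h\|_\infty^2$ and variance sum at most $\sum_i h_i^4\,\mathbb EX_i^4\le K^4\|h\|_4^4$. The standard form $\exp\bigl(-t^2/(2\sigma^2+\tfrac23 bt)\bigr)$ then gives the claim. I would verify this via the MGF bound $\mathbb E e^{\lambda Z}\le \exp\bigl(\sigma_i^2(e^{\lambda b}-1-\lambda b)/b^2\bigr)$ for $Z\le b$ with mean zero.

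\textbf{Main obstacle.} The main obstacle is bookkeeping in (a), namely matching the constant structure of Rio's explicit Fuk--Nagaev form with exponent $p/2$. In particular, one must check that only positive parts enter $C_{p/2}$ and that $\mathbb E|X_i^2-1|^{p/2}$ is bounded by $K^p$ up to an absolute factor. A crude triangle inequality would give $2^{p/2}$, which can be absorbed into $C^{p/2}$.
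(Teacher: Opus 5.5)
Your proposal is correct and matches the paper's proof essentially step for step: Fuk--Nagaev with exponent $p/2$ applied to $h_i^2(X_i^2-1)$ with variance proxy $K^4\|h\|_4^4$ for (a), the generalized Hanson--Wright inequality with $\operatorname{diag}(h_1^2,\ldots,h_d^2)$ for (b), and one-sided Bernstein using $h_i^2(X_i^2-1)\ge-\|h\|_\infty^2$ for (c). For $C_{p/2}$ the paper uses the triangle inequality in $L^{p/2}$ and gets $2K^2\|h\|_p^2$, whereas your bound $(X_i^2-1)_+\le X_i^2$ gives $K^2\|h\|_p^2$ directly; both are fine after absorbing constants.
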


\begin{proof}
\textbf{Finite-moment case.} Set $Z_i=h_i^2(X_i^2-1)$. Since $\mathbb E|X_i|^p\le K^p$,
\[
\begin{aligned}
    \|Z_i\|_{L^{p/2}}
    &=h_i^2\|X_i^2-1\|_{L^{p/2}}\le h_i^2\bigl(\|X_i\|_{L^p}^2+1\bigr)
    \le2K^2h_i^2.
\end{aligned}
\]
We apply Lemma~\ref{lem:concentration}\ref{item:Fuk_Nagaev} to $\sum_{i=1}^dZ_i$ with moment order $q=p/2$. Moreover,
\[
    C_q
    \le\left(\sum_{i=1}^d\|Z_i\|_{L^{p/2}}^{p/2}\right)^{2/p}
    \le\left(\sum_{i=1}^d(2K^2h_i^2)^{p/2}\right)^{2/p}
    =2K^2\|h\|_p^2,
\]
and
\[
    \sum_i\mathbb EZ_i^2
    \le\sum_i h_i^4\mathbb EX_i^4
    \le K^4\sum_i h_i^4
    =K^4\|h\|_4^4.
\]
The Fuk--Nagaev inequality gives the claim.

\smallskip
\noindent\textbf{Sub-Weibull case.} Write $W=X^\top AX$, where $A=\operatorname{diag}(h_1^2,\ldots,h_d^2)$. Then $\operatorname{tr}A=1$, $\|A\|_{\HS}=\|h\|_4^2$, and $\|A\|_{\opnorm}=\|h\|_\infty^2$. The claim follows from Lemma~\ref{lem:concentration}\ref{item:Hanson_Wright} applied to $X^\top AX-\mathbb E[X^\top AX]$.

\smallskip
\noindent\textbf{Lower deviation.} Recall that $Z_i=h_i^2(X_i^2-1)$. Since $X_i^2\ge0$, each $Z_i$ is bounded below:
\[
    Z_i\ge-h_i^2\ge-\|h\|_\infty^2.
\]
The total variance satisfies
\[
    \sigma^2=\sum_{i=1}^d\mathbb EZ_i^2
    =\sum_{i=1}^d h_i^4(\mathbb EX_i^4-1)
    \le K^4\sum_{i=1}^d h_i^4
    =K^4\|h\|_4^4.
\]
Applying the one-sided Bernstein inequality to the lower tail (see, e.g., \cite[Section~2.8]{MR3185193}), we obtain
\[
    \mathbb P\{W-1\le-t\}
    \le\exp\left(-\frac{t^2}{2K^4\|h\|_4^4+\frac23\|h\|_\infty^2t}\right).
\]
This completes the proof.
\end{proof}

\section*{Acknowledgments}
Guangyi Zou is the corresponding author.

\section*{Funding}
The authors are supported by NSF Grant DMS 2451011 and U.S. Air Force Grant FA9550-25-1-0294.

\paragraph{Declaration of AI use.}
Generative AI tools were used for mathematical exploration, to help simplify the upper bound argument in Section 3.2, and to identify Lemma 3.7. They were also used to assist in checking routine computations, proofreading, and polishing the exposition throughout the manuscript. The authors take responsibility for all mathematical statements, proofs, and references.

\end{document}